\newtheorem{theorem}{Theorem}[section]
\newtheorem{proposition}[theorem]{Proposition}
\newtheorem{lemma}[theorem]{Lemma}
\newtheorem{corollary}[theorem]{Corollary}
\numberwithin{equation}{section}
\newtheorem{remark}{Remark}
\begin{document}

\title[Orbital stability of FNLS]{On the orbital stability of fractional Schr\"{o}dinger equations}

\author[Y. Cho]{Yonggeun Cho}
\address{Department of Mathematics, and Institute of Pure and Applied Mathematics, Chonbuk National University, Jeonju 561-756, Republic of Korea}
\email{changocho@jbnu.ac.kr}
\author[G. Hwang]{Gyeongha Hwang}
\address{Department of Mathematical Sciences, Seoul National University, Seoul 151-747, Republic of Korea}
\email{ghhwang@snu.ac.kr}
\author[H. Hajaiej]{Hichem Hajaiej} \address{Department of Mathematics, King
Saud University, P.O. Box 2455, 11451 Riyadh, Saudi Arabia}\email{hhajaiej@ksu.edu.sa}
\author[T. Ozawa]{Tohru Ozawa}\address{Department of Applied Physics, Waseda University, Tokyo 169-8555, Japan}\email{txozawa@waseda.jp}

\maketitle


\begin{abstract}
We show the existence of ground state and  orbital stability of standing waves of fractional Schr\"{o}dinger equations with power type nonlinearity. For this purpose we establish the uniqueness of weak solutions.
\end{abstract}





\newcommand{\p}{\Phi}

\section{Introduction}



 In this paper we consider the following Cauchy problem:
\begin{align}\label{main eqn}\left\{\begin{array}{l}
i\partial_t\p + (-\Delta)^s \p = N(x, \p)\;\;\mbox{in}\;\;\mathbb{R}^{1+n},\\
\p(0, x) = \varphi(x)\;\; \mbox{in}\;\;\mathbb{R}^n.
\end{array}\right.
\end{align}
Here $n \ge 1$, $0 < s < 1$, $\p: \mathbb R^{1+n} \to \mathbb C$ and $N: \mathbb R^n \times \mathbb C \to \mathbb C$.

The equation \eqref{main eqn}, called {\it fractional} nonlinear Schr\"{o}dinger equation, appears in many fields in science and engineering. Other  domains of  applications of such equations, involving the  fractional powers of  the  Laplacian, arise in medicine (RMI and heart  diseases). It is also of a great importance in astrophysics, signal processing, turbulence, and water waves, where the cases $s=\frac14$ and $s = \frac34$ are the most relevant (see \cite{hmow} and  references therein).

We will focus our attention on the orbital stability of standing waves of this Schr\"{o}dinger equation. Our results generalize those of \cite{bori} and \cite{guhu}. The paper \cite{bori} seems to be the first one dealing with the orbital stability in the fractional case. The authors studied \eqref{main eqn} for $s = \frac12, n = 1$ with an autonomous cubic power nonlinearity. In \cite{guhu} the authors extended the previous paper to general nonlinearities for $n \ge 2$, but without showing the uniqueness of weak solutions. 
In our work, we use the concentration-compactness lemma to prove the orbital stability of standing waves, as stated by Cazenave and Lions, but without introducing a problem at infinity. See Proposition \ref{orb1} below. We also establish the uniqueness of weak solutions to the Cauchy problem \eqref{main eqn} under suitable conditions on $V$ and $f$. Unlike the usual Schr\"odinger equation ($s = 1$), it is not an easy matter to show the uniqueness in the fractional setting, since we cannot utilize the standard Strichartz estimates due to a regularity loss (\cite{cox}). Here we exploit weighted Strichartz estimates without regularity loss. Instead, some integrability conditions on $a, V$ are necessary to treat the weights. The details of uniqueness of weak solutions will be discussed in Section \ref{wp}.


To present our results let us set $N(x, \p) = V(x)\p + f(x, \p)$ and describe assumptions: The functions $V: \mathbb R^n \to \mathbb R$ and $f : \mathbb R^n \times \mathbb C \to \mathbb C$ are measurable and $f$ satisfies that
$f(x, z) = \frac{z}{|z|}f(x, |z|)$ for $z \in \mathbb C\setminus \{0\}$, and for some $\ell$ with $0 < \ell \le \frac{4s}{n-2s}$ and nonnegative measurable functions $a, b$
\begin{align}
|f(x, z)| &\le a(x)|z|^{\ell+1},\label{cond-f}
\end{align}
for all $x \in \mathbb R^n$
and \begin{align}
|f(x, z_1) - f(x, z_2)| &\le b(x)(|z_1|^\ell + |z_2|^\ell )|z_1 - z_2|.\label{cond-f2}
\end{align}
for all $x \in \mathbb R^n$ and $z_1, z_2 \in \mathbb C$.
 Let us also set $F(x, |\p|) = \int_0^{|\p|} f(x, \alpha)\,d\alpha$. Then we define a functional $J$ by
$$
J(\p) = \frac12 \|(-\Delta)^\frac s2 \p\|_{L^2}^2 - \frac12 \int V(x)|\p|^2\,dx- \int F(x, |\p|)\,dx
$$
and also $M$ by $M(\p) = \int |\p|^2\,dx$.
By a standing wave of \eqref{main eqn} we mean a solution $\p(t, x)$ of the form $e^{i\omega t} u$ for some $\omega \in \mathbb R$, where $u$ is a solution of the equation
\begin{align}\label{ell}(-\Delta)^s u - \omega u = V(x)u + f(x, u). \end{align}
Some authors have studied the existence of $u$ under suitable conditions on $f$.
For this purpose they showed that if $(u_k)$ is a minimizing sequence of the problem
$$
I_\mu = \inf \{J(u) : u \in S_\mu\}, \quad S_\mu = \{u \in H^s(\mathbb R^n, \mathbb C): M(u) = \mu\}
$$
with a prescribed positive number $\mu$, then $u_k \to u$ in $H^s$ up to a subsequence, where $u$ is a solution of \eqref{ell} for some $\omega$.
Now by following the definition of Cazenave-Lions, we set
$$
\mathcal O_\mu = \{ u \in S_\mu : J(u) = I_\mu\}.
$$

Our first result is the existence of ground states.
\begin{proposition}\label{orb1}
Let $n \ge 1$, $0 < s < 1$ and $0 < \ell < \frac{4s}{n}$. Suppose that $$0 \le V \in L_{loc}^{p_1} + L^{p_2}(|x| > 1)$$ for $\frac{n}{2s}< p_1, p_2 < \infty$ and $f$ satisfies \eqref{cond-f} with $a \in L_{loc}^{q_1} + L^{q_2}(|x| > 1)$ for $\frac{2n}{4s-n\ell} < q_1, q_2 < \infty$, and that there exist $\kappa, R, N, \delta > 0$ and $\beta, \sigma > 0$ such that
$$\frac{n\beta}2 + \delta - 2s < 0$$ and
\begin{align}\label{cond-F1}
F(x, |z|) \ge \kappa |x|^{-\delta}|z|^{2+\beta},
\end{align}
for any $|z| \le N$ and $|x| \ge R$ and \begin{align}\label{cond-F2}F(x, \theta |z|) \ge \theta^{2+\sigma} F(x, |z|)\end{align} for all $x, z, \theta > 1$. Then $\mathcal O_\mu$ is not empty for any $\mu > 0$.
When $\ell = \frac{4s}{n}$, we assume that $a \in L^q(|x| \ge 1) \cap L^\infty$ for some $q$ with $\frac{n^2}{4s^2}< q < \infty$ if $n \ge 2$ and $1 < q < \infty$ if $n = 1$. Then $\mathcal O_\mu$ is not empty  for sufficiently small $\mu > 0$.
\end{proposition}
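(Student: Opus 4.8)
The plan is to identify $\mathcal O_\mu$ with the set of strong $H^s$-limits of minimizing sequences for $I_\mu$, so it suffices to prove that, after passing to a subsequence, every minimizing sequence converges strongly in $H^s$. I would run a Cazenave--Lions concentration-compactness argument, but carried out directly on the weak limit rather than through a problem at infinity.

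First I would check that $I_\mu>-\infty$ and that minimizing sequences are bounded in $H^s$. For $u\in S_\mu$, the fractional Gagliardo--Nirenberg inequality gives $\|u\|_{L^{\ell+2}}^{\ell+2}\lesssim\|(-\Delta)^{s/2}u\|_{L^2}^{n\ell/(2s)}\,\mu^{(\ell+2)/2-n\ell/(4s)}$, and together with Hölder's inequality and the hypotheses on $V$ and $a$ (whose exponent ranges are exactly those making the intermediate Lebesgue exponents subcritical) one gets, for any $\varepsilon>0$, $\int V|u|^2\,dx+\int|F(x,|u|)|\,dx\le\varepsilon\|(-\Delta)^{s/2}u\|_{L^2}^2+C(\varepsilon,\mu)$; since $\ell<\frac{4s}{n}$ forces $n\ell/(2s)<2$, this yields $J(u)\ge\frac14\|(-\Delta)^{s/2}u\|_{L^2}^2-C(\mu)$ on $S_\mu$. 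In the borderline case $\ell=\frac{4s}{n}$ the relevant exponent is exactly $2$ and the absorption costs a factor comparable to $\|a\|\,\mu^{2s/n}$, which is why one must take $\mu$ small and impose the stronger integrability on $a$. Next, to see $I_\mu<0$ I would test $J$ on the dilations $w_\lambda(x)=\lambda^{n/2}w(\lambda x)\in S_\mu$ of a fixed $w\in C_c^\infty$ with $\operatorname{supp}w\subset\{|x|\ge2\}$: the kinetic term scales as $\lambda^{2s}$, while for $\lambda$ small $\operatorname{supp}w_\lambda\subset\{|x|\ge R\}$ and $\|w_\lambda\|_{L^\infty}\le N$, so \eqref{cond-F1} gives $\int F(x,|w_\lambda|)\,dx\gtrsim\lambda^{n\beta/2+\delta}$; as $V\ge0$ and $\frac{n\beta}{2}+\delta-2s<0$, it follows that $J(w_\lambda)<0$ for $\lambda$ small, hence $I_\mu<0$, and the same argument gives $I_\nu<0$ for every $\nu>0$.

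Now let $(u_k)\subset S_\mu$ be minimizing; after a subsequence $u_k\rightharpoonup u$ in $H^s$, $u_k\to u$ a.e.\ and in $L^p_{loc}$ for $2\le p<\frac{2n}{n-2s}$, and $v_k:=u_k-u\rightharpoonup0$. The one structural ingredient I would isolate is the lemma: \emph{if $(v_k)$ is bounded in $H^s$ and $v_k\rightharpoonup0$, then $\int V|v_k|^2\,dx\to0$ and $\int F(x,|v_k|)\,dx\to0$}, proved by splitting $V$ and $a$ into a local part (handled by the compact embedding $H^s(B_\rho)\hookrightarrow L^p(B_\rho)$, $p<\frac{2n}{n-2s}$) and a tail on $\{|x|>\rho\}$ whose norm tends to $0$ as $\rho\to\infty$; this is precisely what dispenses with the problem at infinity. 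Feeding this lemma into the Brezis--Lieb lemma for the kinetic, potential and nonlinear parts of $J$, together with $\|u_k\|_{L^2}^2=\|u\|_{L^2}^2+\|v_k\|_{L^2}^2+o(1)$, I obtain $\mu=\nu+\eta$ and $I_\mu=J(u)+\frac12 L$ with $\nu=\|u\|_{L^2}^2$, $\eta=\lim\|v_k\|_{L^2}^2\ge0$ and $L=\lim\|(-\Delta)^{s/2}v_k\|_{L^2}^2\ge0$. To exclude the loss of mass: if $\nu=0$ then $u=0$ and $I_\mu=\frac12 L\ge0$, contradicting $I_\mu<0$; if $0<\nu<\mu$ then $J(u)\ge I_\nu$ and $L\ge0$ force $I_\mu\ge I_\nu$, which contradicts the strict monotonicity $I_\mu<I_\nu$ for $0<\nu<\mu$. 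That monotonicity I would deduce from the scaling inequality $I_{\theta\nu}<\theta I_\nu$ for $\theta>1$ (apply it with $\theta=\mu/\nu$ and use $I_\nu<0$), and the scaling inequality is where \eqref{cond-F2} is used: rescaling $u\in S_\nu$ to $\sqrt\theta\,u\in S_{\theta\nu}$ and invoking \eqref{cond-F2} with the factor $\sqrt\theta>1$ bounds $J(\sqrt\theta\,u)$ by $\theta\bigl(\frac12\|(-\Delta)^{s/2}u\|_{L^2}^2-\frac12\int V|u|^2\bigr)-\theta^{1+\sigma/2}\int F(x,|u|)\,dx$, so a strict gain appears from $\theta^{1+\sigma/2}>\theta$ once one knows the nonlinear energy is bounded below by a positive constant along the minimizing sequence (for this \eqref{cond-F1} is used once more, as it forces $F\ge0$ for $|x|\ge R$). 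Hence $\nu=\mu$, so $\eta=0$ and then $L=2(I_\mu-J(u))\le0$ gives $L=0$; therefore $v_k\to0$ in $H^s$, i.e.\ $u_k\to u$ in $H^s$, $u\in S_\mu$ and $J(u)=I_\mu$, so $\mathcal O_\mu\ne\emptyset$. Finally the Lagrange multiplier rule for the constrained minimization shows $u$ solves \eqref{ell} for some $\omega\in\mathbb R$.

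The step I expect to be the main obstacle is the last one: because the problem is not translation invariant one cannot slide mass to infinity, so the exclusion of dichotomy must rest on strict monotonicity of $\mu\mapsto I_\mu$, which is exactly why the technical hypotheses \eqref{cond-F1} and \eqref{cond-F2} are imposed --- \eqref{cond-F1} to pin down both $I_\mu<0$ and the positivity of the nonlinear energy, \eqref{cond-F2} to turn the mass rescaling into a strict energy gain. A pervasive secondary difficulty is that $V$ and $a$ lie only in local-plus-decaying Lebesgue classes, so the boundedness estimate, the Brezis--Lieb splitting, and the compactness lemma above all have to be carried out with careful decomposition of the weights rather than with an $L^\infty$ bound.
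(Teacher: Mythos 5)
Your argument is correct in its overall architecture but follows a genuinely different route from the paper in the compactness step. The paper runs Lions' concentration--compactness on the concentration function $\mathfrak m_j(r)=\sup_y\int_{|x-y|<r}|u_j|^2$, splits into vanishing/dichotomy/compactness, rules out dichotomy by the strict subadditivity $I_\mu<I_\nu+I_{\mu-\nu}$, and then must separately show that the translation centers $y_j$ stay bounded (which it does by observing that otherwise the weak limit would vanish and $I_\mu\ge0$). You instead expand $J$ around the weak limit $u$ via Brezis--Lieb, and your key lemma --- that $v_k\rightharpoonup0$ bounded in $H^s$ forces $\int V|v_k|^2\to0$ and $\int F(x,|v_k|)\to0$, by splitting $V,a$ into a compactly-embedded local piece and a tail whose $L^{p_2}$, $L^{q_2}$ (resp.\ $L^{q}$) norms vanish as $\rho\to\infty$, which is exactly where the hypotheses $p_2,q_2,q<\infty$ enter --- is correct and is in substance the same mechanism the paper uses to control $P(u_j)-P(v)$ and to kill unbounded $y_j$. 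Your version buys two simplifications: the escaping mass carries only nonnegative kinetic energy and no potential/nonlinear energy, so you only need the strict \emph{monotonicity} $I_\mu<I_\nu$ for $0<\nu<\mu$ (a weaker consequence of $I_{\theta\nu}<\theta I_\nu$ than full subadditivity), and the translation-invariance bookkeeping disappears entirely. The identification $I_\mu=J(u)+\tfrac12L$ with $L\ge0$ and the three-way case analysis on $\nu=\|u\|_{L^2}^2$ then closes the argument exactly as you describe, in both the subcritical and the $L^2$-critical case.

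The one step you should tighten is the strict scaling inequality $I_{\theta\nu}<\theta I_\nu$. You reduce it to the claim that $\int F(x,|u|)\,dx$ is bounded below by a positive constant along minimizing sequences, citing \eqref{cond-F1}; but \eqref{cond-F1} only gives $F\ge0$ for $|x|\ge R$ \emph{and} $|z|\le N$, and says nothing about the sign of $F$ for $|x|<R$ or large $|z|$, so a uniform positive lower bound on the integral does not follow as stated. (To be fair, the paper's own derivation, which passes through $J(\sqrt\theta v)\le\theta^{1+\sigma/2}J(v)$, implicitly needs $\tfrac12\|(-\Delta)^{s/2}v\|_{L^2}^2-\tfrac12\int V|v|^2\,dx\ge0$ for near-minimizers and is open to the same objection.) You would need either an additional sign hypothesis on $F$, or an argument that near-minimizers of $I_\nu$ necessarily have $\int F(x,|u|)\,dx\ge -J(u)+\bigl(\tfrac12\|(-\Delta)^{s/2}u\|^2-\tfrac12\int V|u|^2\bigr)>0$, which does not follow from the coercivity estimate alone. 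Everything else --- the coercivity via Gagliardo--Nirenberg with the smallness of $\mu$ in the critical case, the negativity $I_\nu<0$ via the dilations supported in $\{|x|\ge R\}$ with $\|w_\lambda\|_{L^\infty}\le N$, and the final Lagrange multiplier remark --- matches the paper and is sound.
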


When $a \in L^\infty$, by adding some natural asymptotic conditions on $V$ and $f$, one can show the existence of ground state. See \cite{fqt} and \cite{haj}. But if we assume the radial symmetry of $V(x), f(x, \cdot)$ in $x$, then using the compactness of embedding $H_{rad}^s \hookrightarrow L^p$, $\frac12 < s < \frac n2$, $2 < p < \frac{2n}{n-2s}$(see \cite{chooz-ccm}), we have the following.
\begin{proposition}\label{orb2}
Let $n \ge 2$, $\frac12 < s < 1$ and $0 < \ell < \frac{4s}{n}$. Let $0 \le V \in L_{rad, loc}^\infty$ satisfy
\begin{align}\label{cond-V}\int_{|x| > 1}V(x)|x|^{-(n-2s)}\,dx < \infty\end{align} and $f(x, \cdot)$ be radially symmetric with $a \in L^\infty$. If $F$ satisfies \eqref{cond-F1} and \eqref{cond-F2}, then $\mathcal O_\mu \cap H_{rad}^s$ is not empty for any $\mu > 0$. When $\ell = \frac{4s}{n}$,  $\mathcal O_\mu$ is not empty  for sufficiently small $\mu > 0$.
\end{proposition}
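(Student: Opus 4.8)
The plan is to realize the members of $\mathcal O_\mu\cap H^s_{rad}$ as minimizers of $J$ on $S_\mu\cap H^s_{rad}$ and to run the direct method, the decisive tool being the compact embedding $H^s_{rad}\hookrightarrow L^p$ for $2<p<\frac{2n}{n-2s}$ (available since $\tfrac12<s<1\le\tfrac n2$) together with the radial decay estimate $|u(x)|\lesssim|x|^{-\frac{n-2s}2}\|u\|_{H^s}$ for $|x|\ge1$. First I would show that $J$ is coercive and bounded below on $S_\mu\cap H^s_{rad}$. Since $a\in L^\infty$, \eqref{cond-f} gives $\int F(x,|u|)\,dx\le\frac{\|a\|_\infty}{\ell+2}\|u\|_{L^{\ell+2}}^{\ell+2}$, and because $2<\ell+2<\frac{2n}{n-2s}$ the fractional Gagliardo--Nirenberg inequality bounds the right side by $C\,\|(-\Delta)^{s/2}u\|_{L^2}^{n\ell/2s}\mu^{\frac{\ell+2}2-\frac{n\ell}{4s}}$; as $\frac{n\ell}{2s}<2$ when $\ell<\frac{4s}n$, Young's inequality absorbs it into $\varepsilon\|(-\Delta)^{s/2}u\|_{L^2}^2+C(\mu)$ (in the borderline case $\ell=\frac{4s}n$ this exponent equals $2$, so one instead needs $\|a\|_\infty\mu^{2s/n}$ small, whence the smallness of $\mu$). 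For the potential term I would split $\int V|u|^2=\int_{|x|\le R}+\int_{|x|>R}$: local boundedness of $V$ yields $\int_{|x|\le R}V|u|^2\le\|V\|_{L^\infty(B_R)}\mu$, while the radial decay together with \eqref{cond-V} yields $\int_{|x|>R}V|u|^2\le C\|u\|_{H^s}^2\int_{|x|>R}V(x)|x|^{-(n-2s)}\,dx\le\varepsilon\big(\|(-\Delta)^{s/2}u\|_{L^2}^2+\mu\big)$ once $R$ is large. Hence $J(u)\ge\tfrac14\|(-\Delta)^{s/2}u\|_{L^2}^2-C(\mu)$, so $I_\mu>-\infty$ and every minimizing sequence in $S_\mu\cap H^s_{rad}$ is bounded in $H^s$.

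Next I would record the two sign facts that rule out loss of mass. That $I_\nu<0$ for every $0<\nu\le\mu$ follows by testing with a concentrating radial bump: fix a radial $w$ supported in $\{1\le|x|\le2\}$ with $\|w\|_{L^2}=1$ and set $w_\lambda(x)=\sqrt\nu\,\lambda^{n/2}w(\lambda x)\in S_\nu\cap H^s_{rad}$. Then $\|(-\Delta)^{s/2}w_\lambda\|_{L^2}^2=\nu\lambda^{2s}\|(-\Delta)^{s/2}w\|_{L^2}^2$, a change of variables combined with \eqref{cond-V} shows the potential term is $o(\lambda^{2s})$, and for $\lambda$ small (so that $|w_\lambda|\le N$ and $1/\lambda\ge R$) the bound \eqref{cond-F1} gives $\int F(x,|w_\lambda|)\,dx\ge c\,\nu^{1+\beta/2}\lambda^{\frac{n\beta}2+\delta}$ with $c>0$; since $\frac{n\beta}2+\delta-2s<0$, the nonlinear term dominates as $\lambda\to0^+$ and $J(w_\lambda)<0$. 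I would then prove the strict monotonicity $I_\mu<I_\nu$ for $0<\nu<\mu$: for $w\in S_\nu$ put $\theta=\sqrt{\mu/\nu}>1$, so $\theta w\in S_\mu$ and $J(\theta w)=\theta^2\big(\tfrac12\|(-\Delta)^{s/2}w\|_{L^2}^2-\tfrac12\int V|w|^2\big)-\int F(x,\theta|w|)=\theta^2J(w)-\big(\int F(x,\theta|w|)\,dx-\theta^2\int F(x,|w|)\,dx\big)$; the super-quadratic lower bound \eqref{cond-F2} yields, after an elementary check, $J(\theta w)\le\theta^2J(w)$, hence $I_\mu\le\theta^2I_\nu$, which together with $I_\nu<0$ gives $I_\mu<I_\nu$.

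Finally, along a minimizing sequence $(u_k)\subset S_\mu\cap H^s_{rad}$, bounded in $H^s$, I would extract a subsequence with $u_k\rightharpoonup u$ in $H^s$, $u_k\to u$ in $L^p$ for all $2<p<\frac{2n}{n-2s}$ (compact embedding), and $u_k\to u$ a.e. Using $a\in L^\infty$ with H\"older (and the elementary bound $|F(x,\alpha_1)-F(x,\alpha_2)|\lesssim(|\alpha_1|^{\ell+1}+|\alpha_2|^{\ell+1})|\alpha_1-\alpha_2|$) gives $\int F(x,|u_k|)\to\int F(x,|u|)$; splitting $\int V|u_k|^2$ at $|x|=R$, on $B_R$ one has $u_k\to u$ in $L^2(B_R)$ (H\"older from $L^p$, $p>2$) and $V\in L^\infty(B_R)$, while the tail is uniformly small by \eqref{cond-V}, so $\int V|u_k|^2\to\int V|u|^2$. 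With the Hilbert-space identities $\|(-\Delta)^{s/2}u_k\|_{L^2}^2=\|(-\Delta)^{s/2}u\|_{L^2}^2+\|(-\Delta)^{s/2}(u_k-u)\|_{L^2}^2+o(1)$ and $\|u_k-u\|_{L^2}^2\to\mu-M(u)$, this gives $I_\mu=J(u)+\tfrac12\lim_k\|(-\Delta)^{s/2}(u_k-u)\|_{L^2}^2\ge J(u)$. If $M(u)=0$ then $u=0$ and $I_\mu\ge0$, contradicting $I_\mu<0$; if $0<M(u)<\mu$ then $J(u)\ge I_{M(u)}>I_\mu$, again impossible. Hence $M(u)=\mu$, so $u\in S_\mu\cap H^s_{rad}$ with $I_\mu\le J(u)\le I_\mu$; that is, $u\in\mathcal O_\mu\cap H^s_{rad}$, and moreover $u_k\to u$ in $H^s$. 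The critical case $\ell=\frac{4s}n$ runs identically provided $\mu$ is small enough that $J$ is coercive and $I_\nu$ finite on $S_\nu\cap H^s_{rad}$ for all $0<\nu\le\mu$.

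The step I expect to be the main obstacle is the strict monotonicity $I_\mu<I_\nu$ --- equivalently, excluding the mass-splitting scenario in the last step. It is the only place where the precise structural hypotheses \eqref{cond-F1} and \eqref{cond-F2} are indispensable: because $V$ and $f$ genuinely depend on $x$, the problem is not scale invariant and Lions' classical subadditivity argument is unavailable, so it must be replaced by the scaling inequality $I_\mu\le(\mu/\nu)I_\nu$, which only bites because \eqref{cond-F1} forces $I_\nu<0$. A secondary technical point is verifying that the weighted estimate for $\int V|u|^2$, coming from the radial decay estimate, really absorbs into the kinetic energy with an arbitrarily small constant, since that is what makes $J$ coercive on $S_\mu\cap H^s_{rad}$ even though $V$ need not be bounded at infinity.
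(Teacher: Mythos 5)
Your proof is correct and reaches the same conclusion, but your compactness step takes a genuinely different route from the paper's. The paper reuses the concentration--compactness trichotomy of Proposition \ref{orb1} verbatim in the radial class (with $y_j=0$): vanishing is excluded by $I_\mu<0$, dichotomy by the strict subadditivity $I_\mu<I_\nu+I_{\mu-\nu}$ (itself derived from the scaling inequality $I_{\theta\mu}\le\theta^{1+\sigma/2}I_\mu<\theta I_\mu$), and only then is the compact embedding $H^s_{rad}\hookrightarrow L^p$ used to pass $P(u_j)\to P(u)$ to the limit. You bypass the trichotomy entirely: weak convergence plus the compact radial embedding already forces convergence of the potential and nonlinear terms, the orthogonal splitting of the kinetic energy gives $J(u)\le I_\mu$, and the only remaining scenario --- loss of $L^2$ mass at infinity --- is excluded by the strict monotonicity $I_\mu<I_\nu$, which you extract from the same scaling inequality ($I_\mu\le(\mu/\nu)I_\nu$ plus $I_\nu<0$). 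This is shorter and cleaner in the radial setting; the paper's route has the merit of being the identical template to the non-radial case, where the translations $y_j$ are genuinely needed. Your annulus-supported test function for $I_\nu<0$ is also a small improvement over the paper's rapidly decreasing $\psi$, since its entire support lands in $\{|x|\ge R\}$ where \eqref{cond-F1} applies and no uncontrolled contribution from $\{|x|<R\}$ arises. One caveat, which you share with the paper rather than introduce: the ``elementary check'' $J(\theta w)\le\theta^2J(w)$ requires $\int F(x,|w|)\,dx\ge0$ (since \eqref{cond-F2} only gives $\int F(x,\theta|w|)\ge\theta^{2+\sigma}\int F(x,|w|)$, which beats $\theta^2\int F(x,|w|)$ only for a nonnegative integral), just as the paper's $J(\sqrt\theta v)\le\theta^{1+\sigma/2}J(v)$ tacitly requires $\tfrac12\|(-\Delta)^{\frac s2}v\|_{L^2}^2-\tfrac12\int V|v|^2\,dx\ge0$; neither step is airtight without an implicit sign assumption along near-minimizers, so you should flag it rather than call it elementary.
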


We say that $\mathcal O_\mu$ is stable if it is not empty and satisfies that
for any $\varepsilon > 0$, there exists a $\delta > 0$ such that if $\varphi \in H^s$ with
$$
\inf_{u \in \mathcal O_\mu} \|\varphi - u\|_{H^s} < \delta,
$$
then
$$
\inf_{u \in \mathcal O_\mu}\|\p(t, \cdot) - u\|_{H^s} < \varepsilon
$$
for all $t \in [-T_1, T_2]$. Here $\p$ is the unique solution to \eqref{main eqn} in $C([-T_1, T_2]; H^s)$ with $M(\p(t)) = M(\varphi)$ and $J(\p(t)) = J(\varphi)$ for all $t \in [-T_1, T_2]$.

Let us introduce our main result.
\begin{theorem}\label{stability1}
Suppose that $s, \ell$, $V$ and $f$ satisfy all conditions as in Proposition \ref{orb1}, and that \eqref{main eqn} has the unique solution in $C([-T_1, T_2]; H^s)$ with $M(\p(t)) = M(\varphi)$ and $J(\p(t)) = J(\varphi)$ for all $t \in [-T_1, T_2]$. Then $\mathcal O_\mu$ is stable.
\end{theorem}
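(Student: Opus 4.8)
The plan is to run the classical Cazenave--Lions compactness argument, taking as the decisive analytic input the precompactness of minimizing sequences for $I_\mu$ that is established in the proof of Proposition \ref{orb1}. So suppose, for contradiction, that $\mathcal O_\mu$ is not stable; since $\mathcal O_\mu \ne \emptyset$ by Proposition \ref{orb1}, there exist $\varepsilon_0 > 0$, data $\varphi_k \in H^s$ with $\inf_{u \in \mathcal O_\mu}\|\varphi_k - u\|_{H^s} \to 0$, and times $t_k$ in the corresponding intervals of existence, such that, writing $\p_k$ for the solution of \eqref{main eqn} with datum $\varphi_k$,
$$
\inf_{u \in \mathcal O_\mu}\|\p_k(t_k, \cdot) - u\|_{H^s} \ge \varepsilon_0 .
$$
Choose $w_k \in \mathcal O_\mu$ with $\|\varphi_k - w_k\|_{H^s} \to 0$; then $M(w_k) = \mu$, $J(w_k) = I_\mu$, and $(\varphi_k)$ is bounded in $H^s$ because the elements of $\mathcal O_\mu$ are uniformly bounded in $H^s$ (coercivity of $J$ on $S_\mu$, as used in the proof of Proposition \ref{orb1}).

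Next I would transport the information from time $0$ to time $t_k$ via the conservation laws. Under the integrability hypotheses on $V$ and $a$ together with the growth bounds \eqref{cond-f}, \eqref{cond-f2} with $0 < \ell \le \tfrac{4s}{n-2s}$, the maps $u \mapsto \int V(x)|u|^2\,dx$ and $u \mapsto \int F(x,|u|)\,dx$ are continuous on $H^s$, uniformly on bounded sets: split $V$ and $a$ into their $L^{p}_{loc}$ and $L^{p}(|x|>1)$ parts and apply H\"older with the Sobolev embeddings $H^s \hookrightarrow L^r$ for the pertinent exponents (for $F$ one uses $|F(x,|u|)| \le \tfrac{1}{\ell+2}\,a(x)|u|^{\ell+2}$). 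Hence $M$ and $J$ are continuous on $H^s$, so $M(\varphi_k) \to \mu$ and $J(\varphi_k) \to I_\mu$; by the assumed conservation of mass and energy, $M(\p_k(t_k)) = M(\varphi_k) \to \mu$ and $J(\p_k(t_k)) = J(\varphi_k) \to I_\mu$.

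Now I produce a minimizing sequence and close the loop. Put $u_k := \p_k(t_k)$ and $v_k := (\mu/M(u_k))^{1/2}\,u_k \in S_\mu$. By the coercivity of $J$ on $S_\mu$ the sequence $(u_k)$, hence $(v_k)$, is bounded in $H^s$; since $\mu/M(u_k) \to 1$ and $J$ is continuous on bounded sets, $J(v_k) \to I_\mu$, i.e.\ $(v_k)$ is a minimizing sequence for $I_\mu$. By the precompactness proved in the course of Proposition \ref{orb1} — this is exactly where the weight $|x|^{-\delta}$ in \eqref{cond-F1} together with \eqref{cond-F2} rules out vanishing and dichotomy and prevents mass from escaping to spatial infinity, so that no translation is needed (the ``no problem at infinity'' point) — a subsequence of $(v_k)$ converges strongly in $H^s$ to some $u \in \mathcal O_\mu$. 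Since $\|u_k - v_k\|_{H^s} = |1 - (\mu/M(u_k))^{1/2}|\,\|u_k\|_{H^s} \to 0$, the same subsequence of $(u_k)$ converges to $u$ in $H^s$, so $\inf_{w\in\mathcal O_\mu}\|u_k - w\|_{H^s} \to 0$, contradicting the lower bound $\ge \varepsilon_0$. For $\ell = \tfrac{4s}{n}$ the argument is identical, with $\mu$ restricted to the small range for which Proposition \ref{orb1} applies.

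The only genuinely non-routine point specific to this theorem is the uniform continuity of $J$ (in particular of the potential and nonlinear terms) on bounded subsets of $H^s$ under the weak integrability assumptions on $V$ and $a$ — i.e.\ continuity, not merely weak lower semicontinuity; once that is in hand, the rescaling is harmless and the rest of the proof simply quotes the concentration-compactness analysis already carried out for Proposition \ref{orb1}.
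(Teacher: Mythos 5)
Your proposal is correct and follows essentially the same route as the paper: a Cazenave--Lions contradiction argument that reduces stability to the precompactness in $H^s$ of (approximate) minimizing sequences, which is exactly the content of the concentration-compactness analysis in the proof of Proposition \ref{orb1}, transported to time $t_k$ by the assumed conservation of $M$ and $J$. The only difference is cosmetic: you renormalize $u_k$ onto $S_\mu$ to invoke the compactness for exact minimizing sequences, whereas the paper applies the same argument directly to sequences with $M(w_j)\to\mu$ and $J(w_j)\to I_\mu$.
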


\begin{theorem}\label{stability2}
Suppose that $s, \ell$, $V$ and $f$ satisfy all conditions as in Proposition \ref{orb2}, and that \eqref{main eqn} has the unique solution in $C([-T_1, T_2]; H^s)$ with $M(\p(t)) = M(\varphi)$ and $J(\p(t)) = J(\varphi)$ for all $t \in [-T_1, T_2]$. Then $\mathcal O_\mu \cap H_{rad}^s$ is stable.
\end{theorem}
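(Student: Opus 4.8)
The plan is to run the Cazenave--Lions argument by contradiction, feeding into it the compactness of radial minimizing sequences that is already contained in the proof of Proposition \ref{orb2}. Suppose $\mathcal O_\mu\cap H_{rad}^s$ were not stable. Then there would be $\varepsilon_0>0$, a sequence $\varphi_k\in H_{rad}^s$ with $\mathrm{dist}_{H^s}(\varphi_k,\mathcal O_\mu\cap H_{rad}^s)\to0$, and times $t_k$ in the respective intervals of existence such that, writing $\p_k$ for the solution of \eqref{main eqn} with datum $\varphi_k$, one has $\mathrm{dist}_{H^s}(\p_k(t_k),\mathcal O_\mu\cap H_{rad}^s)\ge\varepsilon_0$. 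Since $(-\Delta)^s$, $V$ and $f(x,\cdot)$ all commute with rotations, $\p_k(t,\cdot)\in H_{rad}^s$ for every $t$. Because $M$ and $J$ are continuous on $H^s$ under the hypotheses of Proposition \ref{orb2} — this uses \eqref{cond-f}--\eqref{cond-f2} with $a\in L^\infty$ and the Sobolev embedding for the nonlinear part, and the radial decay estimate $|u(x)|\lesssim|x|^{-(n-2s)/2}\|u\|_{\dot H^s}$ for $|x|\ge1$, valid since $s>\tfrac12$, together with \eqref{cond-V} for the potential part — and since both are conserved, $M(\p_k(t_k))=M(\varphi_k)\to\mu$ and $J(\p_k(t_k))=J(\varphi_k)\to I_\mu$.

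Next I would renormalize: set $w_k=(\mu/M(\p_k(t_k)))^{1/2}\p_k(t_k)$, so $w_k\in S_\mu\cap H_{rad}^s$. The family $(\p_k(t_k))$ is bounded in $H^s$ — this is the coercivity estimate that also shows $I_\mu>-\infty$, obtained from the conserved mass and energy and the Gagliardo--Nirenberg inequality, the restriction $0<\ell\le 4s/n$ (with $\mu$ small when $\ell=4s/n$) making the kinetic term dominate — so from $M(\p_k(t_k))\to\mu$ one gets $\|w_k-\p_k(t_k)\|_{H^s}\to0$ and hence $J(w_k)\to I_\mu$. Thus $(w_k)$ is a radial minimizing sequence for $I_\mu$.

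Now recall that the proof of Proposition \ref{orb2} in fact establishes the stronger fact that \emph{every} radial minimizing sequence for $I_\mu$ has a subsequence converging in $H^s$ to some element of $\mathcal O_\mu\cap H_{rad}^s$. Indeed: such a sequence is bounded in $H_{rad}^s$, so after passing to a subsequence it converges weakly in $H^s$ to some $w$ and, by the compact embedding $H_{rad}^s\hookrightarrow L^p$ for $2<p<\tfrac{2n}{n-2s}$, strongly in these $L^p$; this $L^p$-convergence lets the potential and the nonlinear parts of $J$ pass to the limit (again via \eqref{cond-f}--\eqref{cond-f2}, $a\in L^\infty$ and \eqref{cond-V}) and rules out vanishing, since $w=0$ would force $\liminf J(w_k)\ge0>I_\mu$; the possibility $0<M(w)<\mu$ is excluded by the strict subadditivity $I_\mu<I_\nu+I_{\mu-\nu}$ for $0<\nu<\mu$, which follows from $I_\mu<0$ — obtained by evaluating $J$ on a dilation of a fixed test function supported away from the origin, using \eqref{cond-F1} and the condition $\tfrac{n\beta}{2}+\delta<2s$ — together with the scaling inequality $J(\theta^{1/2}u)\le\theta J(u)$ for $\theta>1$, a direct consequence of \eqref{cond-F2}. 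Hence $M(w)=\mu$, so $w_k\to w$ in $L^2$; since the subtracted terms converge this gives $\|w_k\|_{\dot H^s}\to\|w\|_{\dot H^s}$, and with weak convergence we conclude $w_k\to w$ in $H^s$ and $J(w)=I_\mu$, i.e.\ $w\in\mathcal O_\mu\cap H_{rad}^s$.

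Applying this to our sequence $(w_k)$ and combining with $\|w_k-\p_k(t_k)\|_{H^s}\to0$ yields $\p_k(t_k)\to w\in\mathcal O_\mu\cap H_{rad}^s$ in $H^s$ along a subsequence, contradicting $\mathrm{dist}_{H^s}(\p_k(t_k),\mathcal O_\mu\cap H_{rad}^s)\ge\varepsilon_0$; hence $\mathcal O_\mu\cap H_{rad}^s$ is stable. The part requiring real care is the precompactness input of the third paragraph: one must verify that the structural hypotheses \eqref{cond-F1}--\eqref{cond-F2} and \eqref{cond-V} are simultaneously strong enough to keep $I_\mu$ finite and strictly negative, to give the strict subadditivity, and to upgrade $L^p$-convergence along a radial minimizing sequence to convergence of the potential term and of the nonlinearity; the contradiction scheme itself and the renormalization are routine.
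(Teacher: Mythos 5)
Your argument is correct and follows essentially the same route as the paper: argue by contradiction, use conservation of $M$ and $J$ plus their continuity on $H^s$ to produce a radial minimizing sequence $w_k$ with $M(w_k)\to\mu$ and $J(w_k)\to I_\mu$, and then invoke the precompactness of radial minimizing sequences established in the proof of Proposition \ref{orb2} (weak limit, compact embedding $H_{rad}^s\hookrightarrow L^p$, exclusion of vanishing and dichotomy via $I_\mu<0$ and strict subadditivity, convergence of the $\dot H^s$ norm, hence strong $H^s$ convergence). The only cosmetic differences are your explicit mass renormalization, where the paper instead appeals to the continuity of $\mu\mapsto I_\mu$, and your explicit observation that uniqueness propagates radial symmetry of the data to the solution — a point the paper leaves implicit.
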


In view of the well-posedness results in Section \ref{wp} below, by assuming that $V, a, b$ are smooth and have suitable decay at infinity, we get the orbital stability for $\frac12 < s < 1$, $0 < \ell \le \frac{4s}n$ and $\ell < \ell_0$, where $\ell_0 = \infty$, if $n = 1$, $\frac{2s-1 }{2s(1-s)}$ if $n = 2$ and $\frac{n(2s-1)}{(n-2s)(n-1)}$ if $n \ge 3$. The critical case $\ell = \frac{4s}{n}$ can be included when $n = 1, 2, 3$.

Our paper is organized as follows. In Section \ref{ground} we will prove the existence of ground states by showing the compactness of the minimizing sequences of the constrained variational problem. This is  a key step to show the orbital stability of standing waves. This goal is achieved in Theorem \ref{stability1} and Theorem \ref{stability2}, which will be shown in Section \ref{orb}.  In the last section, we will discuss the uniqueness of solutions of the Cauchy problem for a large class of nonlinearities.

\section{Ground state}\label{ground}
\subsection{Proof of Proposition \ref{orb1}} If $0 < \ell < \frac{4s}n$, $p_1, p_2 > \frac n{2s}$ and $q_1, q_2 > \frac{2n}{4s-n\ell}$, then from  Gagliardo-Nirenberg's and Young's inequalities it follows that for any $u \in S_\mu$ there exist $\lambda$ such that
\begin{align}\begin{aligned}\label{lowbound}
J(u) &= \frac12\|(-\Delta)^\frac s2 u\|_{L^2}^2 - \int V(x)|u|^2\,dx - \int F(x, |u|)\,dx\\
&\ge \frac12\|(-\Delta)^\frac s2 u\|_{L^2}^2 - \|V\|_{L^{p_1}(|x| \le 1)}\|u\|_{L^{2p_1'}}^2 - \|V\|_{L^{p_2}(|x| > 1)}\|u\|_{L^{2p_2'}}^2\\
 &\qquad- \|a\|_{L^{q_1}(|x| \le 1)}\|u\|_{L^{(\ell+2)q_1'}}^{\ell+2}- \|a\|_{L^{q_2}(|x| > 1)}\|u\|_{L^{(\ell+2)q_2'}}^{\ell+2}\\
 &\ge \frac14\|(-\Delta)^\frac s2 u\|_{L^2}^2 - \lambda \sum_{i}\mu^{\theta_i} \;(\theta_1 = \theta_2 = 2, \theta_3 = \theta_4 = \ell).
\end{aligned}\end{align}
Thus $I_\mu  > -\infty$ for all $\mu > 0$. If $0 < \ell < \frac{4s}n$, $p_1, p_2 > \frac n{2s}$, $a \in L^\infty$ then there exist $\lambda, \theta_i > 0$, $i = 1, 2$
\begin{align}\begin{aligned}\label{lowbound2}
J(u) &= \frac12\|(-\Delta)^\frac s2 u\|_{L^2}^2 - \int V(x)|u|^2\,dx - \int F(x, |u|)\,dx\\
&\ge \frac12\|(-\Delta)^\frac s2 u\|_{L^2}^2 - \|V\|_{L^{p_1}(|x| \le 1)}\|u\|_{L^{2p_1'}}^2 - \|V\|_{L^{p_2}(|x| > 1)}\|u\|_{L^{2p_2'}}^2\\
 &\qquad\qquad\qquad\;\;\quad- \|a\|_{L^\infty}\|u\|_{L^{(\ell+2)}}^{\ell+2}\\
 &\ge \frac14\|(-\Delta)^\frac s2 u\|_{L^2}^2 - \lambda \sum_{i=1,2}\mu^{\theta_i} - C\|a\|_{L^\infty}\mu^\frac{4s}{n}\|(-\Delta)^\frac s2 u\|_{L^2}^2.
\end{aligned}\end{align}
So, if $C\|a\|_{L^\infty}\mu^\frac{4s}{n} < \frac14$, then $I_\mu > -\infty$.

 We show that
\begin{align}\label{neg}
I_\mu < 0\;\;\mbox{ for all}\;\; \mu > 0.
\end{align}
In fact, for $0 < \lambda \ll1$ letting $\psi_\lambda(x) = \lambda^\frac n2 \psi(\lambda x)$ for a nonnegative, rapidly decreasing radial smooth function $\psi$ in $S_\mu$ with $\psi \le N$, we see that $\psi_\lambda \in S_\mu$ and
\begin{align*}
J(\psi_\lambda) &= \frac12\|(-\Delta)^\frac s2 \psi_\lambda\|_{L^2}^2 - \frac12\int V(x)(\psi_\lambda)^2\,dx - \int F(x, \psi_\lambda)\,dx\\
&\le \lambda^{2s}\frac12\|(-\Delta)^\frac s2 \psi\|_{L^2} - \kappa \lambda^{\frac{n(\beta+2)}{2} - n + \delta} \int_{|x| \ge R}|x|^{-\delta}(\psi(x))^\beta\,dx.
\end{align*}
Since $0 < \lambda \ll 1$ and $\psi$ is smooth and rapidly decreasing, there exist constants $C_1, C_2 > 0$ such that
$$
J(\psi_\lambda) \le \lambda^{2s}(C_1 - \lambda^{\frac{n\beta}2 + \delta - 2s}C_2),
$$
which is strictly negative from \eqref{cond-F1} if $\lambda$ is sufficiently small.

On the other hand, from the proof of Lemma 3.1 of \cite{haj} one can easily show that $I_\mu$ is continuous on $(0, \infty)$.

For each $\mu > 0$ and $\theta > 1$ we take $\varepsilon < -I_\mu(1 - \theta^{-\frac\sigma2})$ and $v \in S_\mu$ such that $I_\mu  < J(v) < I_\mu + \varepsilon$. Then from \eqref{cond-F2} it follows that $$I_{\theta \mu} \le J(\sqrt\theta v)\le \theta^{1+\frac\sigma2}J(v) \le \theta^{1+\frac\sigma2}(I_\mu + \varepsilon) < \theta I_\mu,$$ which means
\begin{align}\label{subadd}
I_{\mu} < I_\nu + I_{\mu-\nu}\;\;\mbox{for all}\;\; 0 < \nu < \mu.
\end{align}
For this see \cite{lion}.

Let $(u_j) \subset S_\mu$ be a minimizing sequence such that $J(u_j) \to I_\mu$. From \eqref{lowbound} we deduce that $(u_j)$ is bounded in $H^s$. To show $\mathcal O_\mu \neq \varnothing$ we will use the concentration-compactness (see \cite{lion}). Let the concentration function $\mathfrak m_j$ be defined by
$$
\mathfrak m_j(r) = \sup_{y \in \mathbb R^n} \int_{|x-y|<r} |u_j(x)|^2\,dx\;\;\mbox{for}\;\;r > 0.
$$
Set
$$
\nu = \lim_{r \to \infty}\liminf_{j \to \infty}\mathfrak m_j(r).
$$
Then $0\le \nu \le \mu$ and there exists a subsequence $u_j$ (still denoted by $u_j$) satisfying the following properties\footnote{One can verify the concentration-compactness by following the arguments in \cite{lion} or \cite{caz}. We omit the details.}.
\begin{enumerate}
\item If $\nu = 0$, then $\|u_j\|_{L^p} \to 0$ as $j \to \infty$ for all $p$ with $2 < p < s^*$, $s^* = \frac{2n}{n-2s}$ if $ n > 2s$ and $s^* = \infty$ if $n = 1$ and $\frac12 \le s < 1$.
\item If $\nu = \mu$, then there exists a sequence $(y_j) \subset \mathbb R^n$ and $u \in H^s$ such that for any $p$ with $2 \le p < s^*$
$$u_j(\cdot +y_j) \to u \;\;\mbox{as}\;\;j \to \infty \;\;\mbox{in}\;\; L^p$$
and given $\varepsilon > 0$ there exists $j_0(\varepsilon)$ and $r(\varepsilon)$ such that
$$\int_{|x-y_j| < r(\varepsilon)} |u_j|^2\,dx \ge \mu - \varepsilon, \;\;\mbox{whenever}\;\;j \ge j_0(\varepsilon).$$
\item If $0 < \nu < \mu$, then there exist $(v_j), (w_j) \subset H^s$ such that
\begin{eqnarray}
&{\rm supp}\;v_j \cap {\rm supp} \;w_j = \varnothing,\label{supp}\\
&\|v_j\|_{H^s} + \|w_j\|_{H^s} \le C \|u_j\|_{H^s},\label{norm}\\
& \lim_{j\to\infty}M(v_j) = \nu,\quad \lim_{j\to\infty}M(w_j) = \mu-\nu,\label{mass-sep}\\
&\liminf_{j\to \infty}\left( \|(-\Delta)^\frac s2 u_j\|_{L^2}^2 - \|(\Delta)^\frac s2 v_j\|_{L^2}^2- \|(\Delta)^\frac s2 w_j\|_{L^2}^2\right) \ge 0,\label{energy-sep}\\
&\lim_{j\to\infty}\|u_j - v_j - w_j\|_{L^p} = 0,\;\;2 \le p < s^*\label{potential-sep}.
\end{eqnarray}

\end{enumerate}

If $\nu = 0$, then for $0 < \ell < \frac{4s}{n}$ we have
\begin{align*}
&\frac12\int V(x)|u_j|^2\,dx + \int F(x, |u_j|)\,dx\\
& \le C\|V\|_{L^{p_1} + L^{p_2}}\|u_j\|_{L^{2p_1'} + L^{2p_2'}}^2 + C\|a\|_{L^{q_1} + L^{q_2}}\|u_j\|_{L^{(\ell+2)q_1'} + L^{(\ell+2)q_1'}} \to 0
\end{align*}
as $j \to \infty$. Here $L^{l_1} + L^{l_2}$ denotes $L^{l_1}(|x| \le 1) + L^{l_2}(|x| > 1)$.
For $\ell = \frac{4s}n$ we also have
\begin{align*}
&\frac12\int V(x)|u_j|^2\,dx + \int F(x, |u_j|)\,dx\\
& \le C\|V\|_{L^{p_1} + L^{p_2}}\|u_j\|_{L^{2p_1'} + L^{2p_2'}}^2 + C\|a\|_{L^\infty}\|u_j\|_{L^{\frac{2n+4s}n}} \to 0
\end{align*}
as $j \to \infty$. This implies $I_\mu = \lim_{j\to\infty}J(u_j) \ge \frac12\liminf\|(-\Delta)^\frac s2 u_j\|_{L^2}^2 \ge 0$ and contradicts \eqref{neg}.

If $0 < \nu < \mu$, then from the support condition \eqref{supp} it follows that
\begin{align*}
&J(u_j) - J(v_j) - J(w_j)\\
&= \frac12\left( \|(-\Delta)^\frac s2 u_j\|_{L^2}^2 - \|(\Delta)^\frac s2 v_j\|_{L^2}^2- \|(\Delta)^\frac s2 w_j\|_{L^2}^2\right)\\
&\quad - \frac12 \int V(x)(|u_j|^2 - |v_j + w_j|^2)\,dx  - \int \left(F(x, |u_j|) - F(x, |v_j + w_j|) \right)\,dx.
\end{align*}
From \eqref{energy-sep} and \eqref{potential-sep} we deduce that
$$
\liminf_{j\to \infty}(J(u_j) - J(v_j) - J(w_j)) \ge 0
$$
and thus
$$
I_\mu = \lim_{j\to\infty}J(u_j) \ge \liminf_{j\to\infty}J(v_j) + \liminf_{j\to\infty}J(w_j).
$$
Since $M(v_j) \to \nu$ and $M(w_j) \to \mu-\nu$, by the continuity of $I_\mu$ on $(0, \infty)$ we get
$$
I_\mu \ge I_\nu + I_{\mu-\nu},
$$
which contradicts \eqref{subadd}.

Therefore $\nu = \mu$. Set $\widetilde u_j(x) = u_j(x + y_j)$. Then $u, \widetilde u_j \in S_\mu$ and $\widetilde u_j \to u$ in $L^p$ for all $2 \le p < s^*$. On the other hand, $(u_j)$ is bounded in $H^s$. So, there is a subsequence (still denoted by $u_j$) converging to $v$ weakly in $H^s$ and strongly in $L_{loc}^p$ for any $1\le p < s^*$. Now for any $\varepsilon > 0$ we can find $R, j_0 > 1$ such that
\begin{align*}
&\int_{|x| > R}V(x)(|u_j|^2 + |v|^2)\,dx \le C\|V\|_{L^{p_2}(|x|>R)}< \frac\varepsilon4,\\
&\int_{|x| \le R}V(x)(|u_j| + |v|)|u_j -v|\,dx \le C\|V\|_{L^{p_1}(|x|\le R)}\|u_j - v\|_{L^{2p_1'}(|x| \le R)} < \frac\varepsilon4,
\end{align*}
(when $0 < \ell < \frac{4s}{n}$)
\begin{align*}
&\int_{|x| > R}a(x)(|u_j|^{\ell+2} + |v|^{\ell+2})\,dx \le C\|a\|_{L^{q_2}(|x| > R)} < \frac\varepsilon4,\\
&\int_{|x| \le R}a(x)(|u_j|^{\ell+1} + |v|^{\ell+1})|u_j - u|\,dx \le C\|a\|_{L^{q_1}(|x| \le R)}\|u_j - v\|_{L^{(\ell+2)q_1'}} < \frac\varepsilon4,
\end{align*}
(when $\ell = \frac{4s}{n}$)
\begin{align*}
&\int_{|x| > R}a(x)(|u_j|^{\ell+2} + |v|^{\ell+2})\,dx \le C\|a\|_{L^{q}(|x| > R)} < \frac\varepsilon4,\\
&\int_{|x| \le R}a(x)(|u_j|^{\ell+1} + |v|^{\ell+1})|u_j - u|\,dx \le C\|a\|_{L^{\infty}(|x| \le R)}\|u_j - v\|_{L^{\frac{2n+4s}{n}}} < \frac\varepsilon4,
\end{align*}
if $j > j_0$. Set $P(w) \equiv J(w) - \frac12\|(-\Delta)^\frac s2 w\|_{L^2}^2$. Then $P(u_j) \to P(v)$ as $j \to \infty$.
Suppose that $(y_j)$ is unbounded. Then up to subsequence we may assume that $|y_j|\to \infty$.
Since $\widetilde u_j \to u$ in $L^2$, $u_j - u(\cdot - y_j) \to 0$ in the sense of distributions. But $u(\cdot-y_j) \to 0$ and $u_j \to v$ in the sense of distributions and thus $v = 0$. That is, $P(u_j) \to 0$ as $j \to \infty$. This implies that $I_\mu = \lim_{j\to \infty}J(u_j) \ge 0$, which contradicts \eqref{neg}. So, $(y_j)$ is bounded. Now let $R^* = \sup_{j \ge 1} |y_j|$. Then for any $\varepsilon > 0$ we have
$$
\int_{|x| < R^* + r(\varepsilon)} |u_j|^2\,dx \ge \int_{|x-y_j| < r(\varepsilon)} |u_j|^2\,dx  \ge \mu - \varepsilon, \;\;\mbox{if}\;\; j \ge j_0.
$$
and thus
$$
M(v) \ge \int_{|x| < R^* + r(\varepsilon)} |v|^2\,dx \ge \lim_{j \to \infty}\int_{|x| < R^* + r(\varepsilon)} |u_j|^2\,dx \ge \mu-\varepsilon.
$$
This means $M(v) \ge \mu$, while the semi-continuity of weak limit implies $M(v) \le \mu$. Then $v \in S_\mu$. Since $P(u_j) \to P(v)$, we have \begin{align}\label{comparable}I_\mu \le J(v) \le \liminf \|(-\Delta)^\frac s2 u_j\|_{L^2}^2 + P(v) = \liminf(J(u_j)) = I_\mu.\end{align} Therefore $J(v) = I_\mu$. This completes the proof of Proposition \ref{orb1}.

\newcommand{\src}{\overline{S}_\mu}
\newcommand{\irc}{\overline{I}_\mu}
\subsection{Proof of Proposition \ref{orb2}}
Set $\src = \{v \in H_{rad}^s : M(v) = \mu\}$ and $\irc = \inf_{v \in \src}J(v)$.

From Proposition 1 of \cite{chooz-ccm} we see that
$$
|u(x)| \le C |x|^{-(\frac n2-s)}\|(-\Delta)^\frac s2 u\|_{L^2}
$$
a.e $x \in \mathbb R^n$, if $u \in H_{rad}^s$, $\frac12 < s < \frac n2$. So, given $\varepsilon > 0$ we can find an $R > 0$ such that
$\int_{|x| > R} V (x)|x|^{-(n-2s)}\,dx < \varepsilon$. From this we get
$$
\int_{|x| > R}V(x)|u|^2\,dx \le C\int_{|x| > R} V(x)|x|^{-(n-2s)}\,dx\|(-\Delta)^\frac s2 u\|_{L^2}^2 \le C\varepsilon \|(-\Delta)^\frac s2 u\|_{L^2}^2.
$$
In view of \eqref{lowbound} and \eqref{lowbound2}, $\irc > -\infty$. From \eqref{cond-F1} and \eqref{cond-F2} it follows that $\irc < 0$ and $I_\mu < I_\nu + I_{\mu-\nu}$ for all $0 < \nu < \mu$. Now let us take a minimizing sequence $(u_j)\subset \src$ such that $J(u_j) \to \irc$. Then from the concentration-compactness we get $(1), (2), (3)$ with $y_j = 0$. By the same argument as above it follows that there exists a function $u \in H^s$ such that $J(u) = \irc$, provided we can show that there exists a subsequence $(u_j)$ (denoted by $u_j$ again)
$P(u_j) \to P(u)$ as $j \to \infty$. In fact, by the compact embedding $H^s \hookrightarrow L^p$, $2 < p < \frac{2n}{n-2s}$, we can find a subsequence $(u_j)$ such that $u_j \to u$ in $L^p$. So, it is clear that  $\int F(x, |u_j|)\,dx \to \int F(x, |u|)\,dx$ for $a \in L^\infty$. Now it remains to show that $\int V|u_j|^2\,dx \to \int V|u|^2\,dx$.
By \eqref{cond-V} we deduce that for any $\varepsilon > 0$ there is an $R > 0$ such that
\begin{align*}
\int_{|x| > R}V|u_j|^2\,dx \le C\int_{|x| > R} V(x)|x|^{-(n-2s)}\,dx\|(-\Delta)^\frac s2 u_j\|_{L^2}^2 \le \frac \varepsilon2.
\end{align*}
Since $V \in L^\infty_{loc}$ and the embedding $H^s \hookrightarrow L^2_{loc}$ is compact, (up to a subsequence) there exists $j_0 > 0$ such that
$$
|\int_{|x| \le R}V|u_j|^2\,dx -  \int_{|x| \le R}V|u|^2\,dx| \le 2\sqrt c\|V\|_{L^\infty(|x| \le R)}\|u_j - u\|_{L^2(|x| \le R)} \le \frac\varepsilon2
$$
if $j \ge j_0$. This completes the proof of Proposition \ref{orb2}.

\section{Proof of Theorems \ref{stability1}, \ref{stability2}}\label{orb}

Since the proofs of Theorems \ref{stability1}, \ref{stability2} are quite the same, we only consider the proof of Theorem \ref{stability1}.
The proof proceeds by contradiction. Suppose that $\mathcal O_\mu$ is not stable, then either $\mathcal O_\mu$ is empty or there exist $w \in \mathcal O_\mu$ and a sequence $\p_0^j \in H^s$ such that
$$
\|\p_0^j - w\|_{H^s} \to 0 \;\;\mbox{as}\;\;j \to \infty
$$
but
\begin{align}\label{cont1}
\inf_{v \in \mathcal O_\mu} \|\p^j(t_j, \cdot) - v\|_{H^s} \ge \varepsilon_0
\end{align}
for some sequence $t_j \in \mathbb R$ and $\varepsilon_0$, where $\p^j(t, \cdot)$ is the solution of \eqref{main eqn} corresponding to the initial data $\p_0^j$.
Let $w_j = \p^j(t_j, \cdot)$. Since $w \in S_\mu$ and $J(w) = I_\mu$, it follows from the continuity of $L^2$ norm and $J$ in $H^s$ that
$$
\|\p_0^j\|_{L^2} \to \mu \;\;\mbox{and}\;\; J(\p_0^j) \to I_\mu.
$$
Thus we deduce from the conservation laws that
$$
\|w_j\|_{L^2} = \|\p_0^j\|_{L^2} \to \mu,\quad J(w_j) = J(\p_0^j) \to I_\mu.
$$
Therefore if $w_j$ has a subsequence converging to an element $w \in H^s$ such that $\|w\|_{L^2} = \mu$ and $J(w) = I_\mu$.
This shows that $w \in \mathcal O_\mu$ but
$$
\inf_{v \in \mathcal O_\mu} \|\p^j(t_j, \cdot) - v\|_{H^s} \le \|w_j - w\|_{H^s},
$$
which contradicts \eqref{cont1}. Since $\mathcal O_\mu$ is not empty, to show the orbital stability of $\mathcal O_\mu$ one has to prove that any sequence $(w_j) \subset H^s$ with
\begin{align}\label{cont2}
\|w_j\|_{L^2} \to \mu \;\;\mbox{and}\;\; J(w_j) \to I_\mu
\end{align}
is relatively compact in $H^s$.
Since $I_\mu$ is continuous w.r.t $\mu \in (0, \infty)$ and $\ell \le \frac{4s}n$, by the arguments in the proof of Proposition \ref{orb1} we may assume that $(w_j)$ is bounded in $H^s$ and also verify from \eqref{comparable} that by passing to a subsequence there exists $w \in H^s$ such that
\begin{align}\label{cont3}
w_j \rightharpoonup w \;\;\mbox{in}\;\;H^s\;\;\mbox{and}\;\; \lim_{j\to \infty}\|(-\Delta)^\frac s2 w_j\|_{L^2} = \|(-\Delta)^\frac s2 w\|_{L^2}.
\end{align}
This implies $w_j \to w$ in $H^s$ and thus the relative compactness.

\section{Uniqueness and well-posedness}\label{uniq}
In this section we show the existence of weak solutions and its uniqueness.

\subsection{Uniqueness of weak solution}

We first consider the existence of weak solutions to \eqref{main eqn}.

\begin{proposition}\label{ws1}
Let $n \ge 1$, $0 < s < 1$ and $0 < \ell < s^*-2$, $s^* = \frac{2n}{n-2s}$ if $ n > 2s$ and $s^* = \infty$ if $n = 1$ and $\frac12 \le s < 1$. Let $V \in L_{loc}^{p_1} + L^{p_2}(|x| > 1)$ for $\frac{n}{2s}< p_1, p_2 \le \infty$. Let $f$ satisfy \eqref{cond-f} and \eqref{cond-f2}  with $a, b \in L_{loc}^{q_1} + L^{q_2}(|x| > 1)$ for $q_0 < q_1, q_2 \le \infty$, where
$q_0 = \frac{2n}{2n-(\ell+2)(n-2s)}$ if $n > 2s$,  and $q_0 = 1$ if $n = 1$ and $\frac12 \le s < 1$.
 Then there exists a weak solution $\p$ such that \begin{eqnarray*} &\p \in L^\infty(-T_{min}, T_{max} ; H^s) \cap W^{1, \infty}(-T_{min}, T_{max} ; H^{-s}),\\ &M(\p(t)) = M(\varphi),\;\;J(\p(t)) \le J(\varphi)\end{eqnarray*} for all $t \in (-T_{min}, T_{max})$, where $(-T_{min}, T_{max})$ is the maximal existence time interval of $\p$ for given initial data $\varphi$.
\end{proposition}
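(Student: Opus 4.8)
The plan is to construct the weak solution by a compactness argument applied to a sequence of regularized or finite-dimensional approximations, following the standard scheme for nonlinear Schr\"odinger equations (as in Cazenave's book \cite{caz}), but being careful about the low regularity coming from the fractional Laplacian and the merely integrable coefficients $V,a,b$.

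\medskip

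First I would set up the approximation. One natural choice is a Galerkin-type truncation: let $(e_k)$ be a suitable basis (e.g. eigenfunctions of $(-\Delta)^s + 1$ on a large ball, or a spectral truncation $P_m$ on all of $\mathbb R^n$) and solve the projected ODE system
$$
i\partial_t \p_m + P_m(-\Delta)^s \p_m = P_m N(x,\p_m), \qquad \p_m(0) = P_m\varphi,
$$
which is a locally well-posed ODE in a finite-dimensional space. Alternatively one truncates the nonlinearity, $N_m(x,z) = N(x, z)\chi(|z|/m)$, and mollifies $V,a,b$; either way the point is to get genuine $H^s$ solutions for which the formal computations are rigorous. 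For these approximants, multiply the equation by $\overline{\p_m}$ and integrate to get conservation of $M(\p_m(t)) = M(P_m\varphi)$; multiply by $\partial_t\overline{\p_m}$ and take real parts to get conservation (or near-conservation, with $\le$ after passing to the limit) of $J(\p_m(t))$. The key quantitative input is exactly inequality \eqref{lowbound}: combining conservation of mass and energy with the Gagliardo--Nirenberg/Young bound there, and using $\ell < s^* - 2$ so that all the interpolation exponents are subcritical, gives an a priori bound
$$
\|\p_m(t)\|_{H^s} \le C(\|\varphi\|_{H^s})
$$
uniform in $m$ and in $t$ on a time interval whose length depends only on $\|\varphi\|_{H^s}$; this extends the solutions to a common maximal interval $(-T_{min}, T_{max})$ and yields a uniform bound in $L^\infty(-T_{min},T_{max};H^s)$. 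Reading the equation back, $\partial_t\p_m = i P_m(-\Delta)^s\p_m - iP_m N(x,\p_m)$ is bounded in $L^\infty(H^{-s})$ (the nonlinear term lands in $H^{-s}$, hence in $L^{s^{*\prime}}$-type spaces, by \eqref{cond-f} together with the integrability hypotheses $p_i > n/2s$, $q_i > q_0$ and H\"older's inequality), so $(\p_m)$ is uniformly bounded in $W^{1,\infty}(-T_{min},T_{max};H^{-s})$.

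\medskip

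Next I would extract a limit. By Banach--Alaoglu, up to a subsequence $\p_m \overset{*}{\rightharpoonup} \p$ in $L^\infty(I;H^s)$ and $\partial_t\p_m \overset{*}{\rightharpoonup} \partial_t\p$ in $L^\infty(I;H^{-s})$ on any compact $I \Subset (-T_{min},T_{max})$. The Aubin--Lions lemma with $H^s \hookrightarrow\hookrightarrow L^p_{loc} \hookrightarrow H^{-s}$ (valid for $2 \le p < s^*$, which is where the subcriticality $\ell+2 < s^*$ is used again) gives $\p_m \to \p$ strongly in $C(I; L^p_{loc})$ and a.e.\ in $(t,x)$. This a.e.\ convergence, plus the uniform $L^\infty(H^s)$ bound and the Lipschitz bound \eqref{cond-f2}, lets me pass to the limit in the nonlinear term: split $V = V_1 + V_2$, $a = a_1 + a_2$ with $V_1 \in L^{p_1}_{loc}$, $V_2 \in L^{p_2}(|x|>1)$ etc., handle the exterior pieces by the smallness of the tails (as is done several times in the proof of Proposition~\ref{orb1}) and the local pieces by dominated convergence, to conclude $P_m N(x,\p_m) \to N(x,\p)$ in $L^\infty(I; H^{-s})$ or at least in the sense of distributions. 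Hence $\p$ solves \eqref{main eqn} in the distributional/weak sense, and $\p \in L^\infty(I;H^s) \cap W^{1,\infty}(I;H^{-s})$ on every $I \Subset (-T_{min},T_{max})$, which upgrades to the full interval. Mass conservation $M(\p(t)) = M(\varphi)$ passes to the limit because $M(P_m\varphi) \to M(\varphi)$ and $\p_m \to \p$ in $C(I;L^2_{loc})$ combined with the uniform $L^2$ bound forces the full $L^2$ convergence; and $J(\p(t)) \le J(\varphi)$ follows from weak lower semicontinuity of $\|(-\Delta)^{s/2}\cdot\|_{L^2}^2$ (the quadratic potential and the $F$-term converge by the strong $L^p_{loc}$ convergence and tail smallness), whence we only get the inequality, not equality, consistent with the statement.

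\medskip

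\textbf{Main obstacle.} The delicate point is not the abstract compactness but making the nonlinear term honestly belong to a space in which $\partial_t\p$ lives and in which we can pass to the limit, given that $V,a,b$ are only in $L^{p}_{loc} + L^{p}(|x|>1)$ rather than $L^\infty$. One must check that the exponents $q_0 = \frac{2n}{2n-(\ell+2)(n-2s)}$ and $p_i > n/2s$ are exactly what is needed so that, for $u \in H^s$, the products $V u$ and $a|u|^{\ell+1}$ lie in $H^{-s}$ (equivalently, pair against $H^s$ test functions) with norm controlled by $\|u\|_{H^s}$ to an appropriate power — this is the Sobolev/H\"older bookkeeping that also underlies \eqref{lowbound}, and it is the only place where the precise form of $q_0$ is forced. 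A secondary technical nuisance is that with Galerkin truncation the projector $P_m$ does not commute with multiplication by $V$ or with $|\cdot|^{\ell+1}$, so the energy identity for $\p_m$ is only approximate; this is harmless since we only claim $J(\p(t)) \le J(\varphi)$, but it should be noted. I would expect to cite \cite{caz} for the structure of the argument and spend the real work on the two a priori estimates (uniform $H^s$ bound via conservation laws and \eqref{lowbound}, and the $H^{-s}$ bound on the nonlinearity) and on the tail estimates for passing to the limit.
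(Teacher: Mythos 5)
Your overall scheme---regularize, obtain uniform bounds, extract a limit by compactness, pass to the limit in the nonlinearity---is the same one the paper uses; the paper compresses all of that machinery into a citation of the standard regularizing argument of \cite{caz} and devotes its entire proof to the single estimate that makes the machinery run, namely the local Lipschitz bound $\|N(\cdot,u)-N(\cdot,v)\|_{H^{-s}}\le C(K)\|u-v\|_{H^s}$ for $\|u\|_{H^s}+\|v\|_{H^s}\le K$ (obtained by showing the difference lands in $L^{l_1'}+L^{l_2'}$ for suitable $l_i\in[2,s^*)$). You correctly identify this as the crux (``the only place where the precise form of $q_0$ is forced''), but you never carry it out, and it is not mere bookkeeping: the paper has to split into three cases ($n>2s$; $n=1$, $\tfrac12<s<1$; $n=1$, $s=\tfrac12$), the last requiring auxiliary exponents $\widetilde p_i\gg1$ because $H^{1/2}(\mathbb R)$ does not embed into $L^\infty$. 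As written, your proposal defers the entire substance of the proof.

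More seriously, the step where you claim the uniform $H^s$ bound would fail as stated. You invoke \eqref{lowbound} together with conservation of mass and energy, ``using $\ell<s^*-2$ so that all the interpolation exponents are subcritical.'' But \eqref{lowbound} is established only for $\ell<\frac{4s}{n}$ (mass-subcritical) and under the stronger integrability $q_i>\frac{2n}{4s-n\ell}$, a condition that is not even meaningful when $\ell\ge\frac{4s}{n}$; Proposition \ref{ws1} allows the full energy-subcritical range $\ell<\frac{4s}{n-2s}$ with only $q_i>q_0$. In the regime $\frac{4s}{n}\le\ell<\frac{4s}{n-2s}$ the functional $J$ is not coercive on sets of fixed mass (this is precisely the regime where finite-time blow-up is expected), so conservation laws cannot yield an a priori $H^s$ bound. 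The uniform bound for the approximations on a short common interval depending on $\|\varphi\|_{H^s}$ must instead come from the boundedness and Lipschitz continuity of $N$ from bounded sets of $H^s$ into $H^{-s}$ via the standard continuity argument for the regularized flows---that is, from the very estimate you postponed. Once that estimate is proved, the rest of your outline (Banach--Alaoglu, Aubin--Lions, tail splitting of $V$, $a$, $b$, weak lower semicontinuity giving $J(\Phi(t))\le J(\varphi)$) is sound and standard.
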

\begin{proof}[Proof of Proposition \ref{ws1}]
To show the existence of weak solutions we follow the standard regularizing argument (for instance see \cite{caz}). For this purpose we have only to verify that $\|N(x, \p) - N(x, \Psi)\|_{H^{-s}} \le C(K)\|\p-\Psi\|_{H^s}$, provided $\|\p\|_{H^s} + \|\Psi\|_{H^s} \le K$.
In fact, since $p_i > \frac{n}{2s}, i = 1, 2$, we can always find $r_i, l_i \in [2, s^*)$ such that
$$
\|V(\p-\Psi)\|_{L^{l_1'} + L^{l_2'}} \le \|V\|_{L^{p_1} + L^{p_2}} \|\p-\Psi\|_{L^{r_1}+L^{r_2}}
$$
for $\frac1{r_i} = 1 - \frac1{l_i} - \frac1{p_i}$.
If $n > 2s$ and $q_0 < q_i \le \infty$,
\begin{align*}
&\|f(x,\p) - f(x,\Psi)\|_{L^{\widetilde l_1'} + L^{\widetilde l_2'}}\\
 &\le C\|b\|_{L^{q_1} + L^{q_2}}(\|\p\|_{L^{\widetilde r_1} + L^{\widetilde r_2}}^\ell + \|\Psi\|_{L^{\widetilde r_1} + L^{\widetilde r_2}}^\ell)\|\p-\Psi\|_{L^{\widetilde r_1}+L^{\widetilde r_2}}\\
 &\le C\|b\|_{L^{q_1} + L^{q_2}}(2K^\ell)\|\p-\Psi\|_{L^{\widetilde r_1}+L^{\widetilde r_2}}
\end{align*}
for $\frac{\ell+1}{\widetilde r_i} = 1- \frac1{q_i}- \frac1{\widetilde l_i}$. Here we used the Sobolev embedding $H^s \hookrightarrow L^{\widetilde r_i}$.
If $n = 1$ and $\frac12 < s < 1$, for any $1 < q_1, q_2 \le \infty$ we can find $ \widetilde r_i, \widetilde l_i \in [2, \infty)$ such that
$\frac1{\widetilde l_i} = \frac1q_i + \frac1{\widetilde r_i}$. Thus from the embedding $H^s \hookrightarrow L^\infty$ we have
\begin{align*}
&\|f(x,\p) - f(x,\Psi)\|_{L^{\widetilde l_1'} + L^{\widetilde l_2'}}\\
 &\le C\|b\|_{L^{q_1} + L^{q_2}}(\|\p\|_{L^\infty}^\ell + \|\Psi\|_{ L^\infty}^\ell)\|\p-\Psi\|_{L^{\widetilde r_1}+L^{\widetilde r_2}}\\
 &\le C\|b\|_{L^{q_1} + L^{q_2}}(2K^\ell)\|\p-\Psi\|_{L^{\widetilde r_1}+L^{\widetilde r_2}}.
\end{align*}
If $n = 1$ and $s = \frac12$, then for any $1 < q_1, q_2 \le \infty$, we can find $ \widetilde r_i, \widetilde l_i \in [2, \infty)$ and $\widetilde p_i \gg 1$ such that $\frac1{\widetilde l_i} = \frac1q_i + \frac{\ell}{\widetilde p_i} + \frac1{\widetilde r_i}$. So, we have
\begin{align*}
&\|f(x,\p) - f(x,\Psi)\|_{L^{\widetilde l_1'} + L^{\widetilde l_2'}}\\
 &\le C\|b\|_{L^{q_1} + L^{q_2}}(\|\p\|_{L^{\widetilde p_1} + L^{\widetilde p_2}}^\ell + \|\Psi\|_{L^{\widetilde p_1} + L^{\widetilde p_2}}^\ell)\|\p-\Psi\|_{L^{\widetilde r_1}+L^{\widetilde r_2}}\\
 &\le C\|b\|_{L^{q_1} + L^{q_2}}(2K^\ell)\|\p-\Psi\|_{L^{\widetilde r_1}+L^{\widetilde r_2}}.
\end{align*}
This proves Proposition \ref{ws1}.
\end{proof}

\subsubsection{1-d uniqueness}
\begin{proposition}\label{1dunique}
Let $n = 1$, $\frac12 < s < 1$, and $0 < \ell < \infty$. Suppose that $V$ and $f$ satisfy the conditions of Proposition \ref{ws1} and further $V, b \in L^\infty$.  Then the $H^s$-weak solution to \eqref{main eqn} is unique.
\end{proposition}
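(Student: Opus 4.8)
The plan is to run a direct $L^2$ energy estimate on the difference of two weak solutions and close it by Gronwall's inequality, using the Sobolev embedding $H^s(\mathbb R)\hookrightarrow L^\infty(\mathbb R)$, which is available precisely because $n=1$ and $s>\frac12$.

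First I would take two weak solutions $\p,\Psi$ in the class of Proposition \ref{ws1}, both with initial datum $\varphi$, so that each is bounded in $H^s$, say $\|\p\|_{L^\infty_tH^s}+\|\Psi\|_{L^\infty_tH^s}\le K$, and set $w=\p-\Psi$. Then $w$ solves $i\partial_t w+(-\Delta)^sw=N(x,\p)-N(x,\Psi)$ in $H^{-s}$ with $w(0)=0$. The right-hand side in fact lies in $L^\infty_tL^2$: writing $N(x,z)=V(x)z+f(x,z)$, the term $Vw$ is in $L^2$ since $V\in L^\infty$, while \eqref{cond-f2} gives $|f(x,\p)-f(x,\Psi)|\le b(x)(|\p|^\ell+|\Psi|^\ell)|w|$, and $\|\p(t)\|_{L^\infty}+\|\Psi(t)\|_{L^\infty}\le CK$ by $H^s\hookrightarrow L^\infty$, whence $f(x,\p)-f(x,\Psi)\in L^2$ as well.

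Next I would invoke the standard regularization lemma (the same spatial mollification used to construct the weak solution, cf. \cite{caz}) to justify that $w\in C([-T_1,T_2];L^2)$ and that
$$\frac{d}{dt}\|w(t)\|_{L^2}^2=2\,\mathrm{Im}\int_{\mathbb R}\bigl(N(x,\p)-N(x,\Psi)\bigr)\overline{w}\,dx .$$
The dispersive contribution vanishes because $\int(-\Delta)^sw\,\overline w\,dx=\|(-\Delta)^{s/2}w\|_{L^2}^2$ is real, and the potential contribution vanishes because $V$ is real, so $\int V|w|^2\,dx$ is real. Hence, by \eqref{cond-f2} and $H^s\hookrightarrow L^\infty$,
$$\frac{d}{dt}\|w(t)\|_{L^2}^2=2\,\mathrm{Im}\int_{\mathbb R}\bigl(f(x,\p)-f(x,\Psi)\bigr)\overline{w}\,dx\le 2\|b\|_{L^\infty}\bigl(\|\p(t)\|_{L^\infty}^\ell+\|\Psi(t)\|_{L^\infty}^\ell\bigr)\|w(t)\|_{L^2}^2\le C(K)\|w(t)\|_{L^2}^2 .$$
Since $\|w(0)\|_{L^2}=0$, Gronwall's inequality forces $w\equiv0$ on $[-T_1,T_2]$, i.e. $\p=\Psi$.

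The only genuinely delicate point is the energy identity in the low-regularity class $L^\infty_tH^s\cap W^{1,\infty}_tH^{-s}$: \emph{a priori} the quantity $\langle\partial_t w,w\rangle$ is merely an $H^{-s}$–$H^s$ pairing, and the chain rule for $t\mapsto\|w(t)\|_{L^2}^2$ must be obtained by approximation rather than assumed. This is handled exactly as in the existence proof, by mollifying in space, differentiating the regularized identity, and passing to the limit; no new idea beyond Proposition \ref{ws1} is needed. I would also remark that the hypothesis $s>\frac12$ (strictly) enters only here, to secure $H^s(\mathbb R)\hookrightarrow L^\infty(\mathbb R)$, so that the nonlinearity is Lipschitz on $H^s$-bounded sets with an $L^2$ modulus of continuity, uniformly in $t$; once this is in hand the value of $\ell$ plays no role, which is why $0<\ell<\infty$ is admissible.
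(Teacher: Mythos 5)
Your proof is correct, but it closes the argument by a different mechanism than the paper. The paper works with the mild (Duhamel) formulation: it writes $\p-\Psi$ via the integral equation \eqref{int eqn}, uses the unitarity of $U(t)$ on $L^2$ to get
$\|\p-\Psi\|_{L^\infty(-t_1,t_2;L^2)}\le C(t_1+t_2)\bigl(\|V\|_{L^\infty}+\|b\|_{L^\infty}(2K^\ell)\bigr)\|\p-\Psi\|_{L^\infty(-t_1,t_2;L^2)}$,
absorbs the right-hand side for $t_1,t_2$ small, and then runs a maximal-interval continuation argument to cover all of $[-T_1,T_2]$. You instead differentiate $\|w(t)\|_{L^2}^2$ and apply Gronwall, which dispenses with both the smallness-of-time step and the continuation step in one stroke. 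The core nonlinear estimates are identical in the two arguments ($V,b\in L^\infty$ plus $H^s(\mathbb R)\hookrightarrow L^\infty$ for $s>\frac12$, which is exactly why any $\ell<\infty$ is allowed), so the trade-off is purely in the technical overhead: your route must justify the chain rule for $t\mapsto\|w(t)\|_{L^2}^2$ in the class $L^\infty_tH^s\cap W^{1,\infty}_tH^{-s}$ (a Lions--Magenes type lemma, which you correctly flag and which does hold here since $\langle\partial_t w,w\rangle$ is a legitimate $H^{-s}$--$H^s$ pairing), whereas the paper's route must justify that an arbitrary weak solution satisfies the integral equation --- a step of comparable delicacy that the paper also passes over quickly. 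Both are standard and both are sound; your version is arguably slightly cleaner in that it avoids the maximality argument at the end.
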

\begin{proof}[Proof of Proposition \ref{1dunique}]
The solution $\p$ constructed in Proposition \ref{ws1} satisfies the integral equation
\begin{align}\label{int eqn}
\p(t) = U(t)\varphi - i\int_0^t U(t-t')N(\cdot, \p(t'))\,dt'\;\;\mbox{a.e.}\;t \in (-T_{min}, T_{max}),
\end{align}
where $U(t) = e^{it(-\Delta)^s}$. 
Let $\Psi$ be another weak solution of \eqref{main eqn} with the same initial data as $\p$ on the interval $[-T_1, T_2] \subset (-T_{min}, T_{max})$. Assume that $\|\p\|_{L^\infty(-T_1, T_2; H^s)} + \|\Psi\|_{L^\infty(-T_1, T_2; H^s)} \le K$. Then for any interval $[-t_1, t_2] \in [T_1, T_2]$ we have
\begin{align*}
&\|\p-\Psi\|_{L^\infty(-t_1, t_2; L^2)}\\
 &\le C\int_{-t_1}^{t_2}(\|V(\p-\Psi)\|_{L^2} + \|b(|\p|^\ell + |\Psi|^\ell)|\p-\Psi|\|_{L^2})\,dt'\\
&\le C\int_{-t_1}^{t_2}(\|V\|_{L^\infty} + \|b\|_{L^\infty}(\|\p\|_{L^\infty}^\ell + \|\Psi\|_{L^\infty}^\ell))\|\p-\Psi\|_{L^2}\,dt'\\
&\le C(t_1 + t_2)(\|V\|_{L^\infty} + \|b\|_{L^\infty}(2K^\ell))\|\p-\Psi\|_{L^\infty(-t_1, t_2; L^2)}.
\end{align*}
So $\p = \Psi$ on $[-t_1, t_2]$ for sufficiently small $t_1, t_2$. Let $I = (-a , b)$ be the maximal interval  of $[-T_1, T_2]$ with $\|\p - \Psi\|_{L^\infty(-c, d; L^2)} = 0$ for $c < a, d < b$. Suppose that $a < T_1$ or $b < T_2$. Without loss of generality, we may assume that $a < T_1$ and $b < T_2$. Then for a small $\varepsilon > 0$ we can find $ a < t_1 < T_1, b < t_2 < T_2$ such that
\begin{align*}
&\|\p-\Psi\|_{L^\infty(-t_1, t_2; L^2)}\\
&\le C\int_{-t_1}^{t_2}(\|V\|_{L^\infty} + \|b\|_{L^\infty}(\|\p\|_{L^\infty}^\ell + \|\Psi\|_{L^\infty}^\ell))\|\p-\Psi\|_{L^2}\,dt'\\
&\le C(\|V\|_{L^\infty} + \|b\|_{L^\infty}(2K^\ell))\int_{-t_1, t_2}\|\p-\Psi\|_{ L^2}\,dt'\\
&= C(\|V\|_{L^\infty} + \|b\|_{L^\infty}(2K^\ell))(t_1+t_2-a-b)\|\p-\Psi\|_{L^\infty(-t_1, t_2; L^2)}\\
&\le (1-\varepsilon)\|\p-\Psi\|_{L^\infty(-t_1, t_2; L^2)}.
\end{align*}
This contradicts the maximality of $I$. Thus $I = [-T_1, T_2]$. Since $[-T_1, T_2]$ is arbitrarily taken in $(-T_{min}, T_{max})$, we finally get the whole uniqueness.
\end{proof}

\subsubsection{Conditional uniqueness for $n \ge 2$}
The weak solution can be shown to be unique under an weighted integrability condition. For this purpose we introduce a mixed norm $\|h\|_{L_\rho^m L_\sigma^{\widetilde m}}$, defined by $(\int_0^\infty (\int_{S^1}|h(\rho\sigma)|^{\widetilde m}\,d\sigma)^\frac m{\widetilde m}\,\rho^{n-1} d\rho)^\frac1m$ for $1\le m, \widetilde m < \infty$. The case $m = \infty$ or $\widetilde m =\infty$ can be defined in the usual way. We set $\ell_0 = \frac{2s-1 }{2s(1-s)}$ if $n = 2$ and $\frac{n(2s-1)}{(n-2s)(n-1)}$ if $n \ge 3$, and set $\delta = \frac{n+2s}{q} -\frac n2$. Then we have the following.
\begin{proposition}\label{unique}
Let $n \ge 2$, $\frac12 < s < 1$, and $0 < \ell < \ell_0$ if $n = 2$. Suppose that $V$ and $f$ satisfy the conditions of Proposition \ref{ws1} and further that $|x|^{\delta} V \in L_\rho^{m_1}L_\sigma^{\widetilde m_1}$, $|x|^{\delta} b \in L_\rho^{m_2}L_\sigma^{\widetilde m_2}$ for $$\frac1{m_1} = \frac12- \frac1{q}, \quad \frac1{\widetilde m_1} = \frac{\frac{2s}{q} - \frac12}{n-1}$$  and $$\frac 1{m_2} = \frac12- \frac1{q} - \frac{\ell(n-2s)}{2n}, \quad \frac1{\widetilde m_2} = \frac{\frac{2s}{q} - \frac12}{n-1} - \frac{\ell(n-2s)}{2n}$$ with $\frac12 + \frac{(n-1)\ell(n-2s)}{2n} \le \frac{2s}{q} \le s $.  Then the $H^s$-weak solution to \eqref{main eqn} is unique.
\end{proposition}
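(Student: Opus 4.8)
The plan is to reproduce the short-time contraction argument of Proposition~\ref{1dunique}, the only genuinely new point being that the embedding $H^s\hookrightarrow L^\infty$ used there is unavailable for $n\ge2$ and must be replaced by a weighted Strichartz estimate for $U(t)=e^{it(-\Delta)^s}$ carrying \emph{no loss of derivatives} --- precisely the feature that separates the fractional case from $s=1$, where ordinary Strichartz estimates would do. Let $\Phi,\Psi$ be two $H^s$-weak solutions on $[-T_1,T_2]\subset(-T_{min},T_{max})$ with the same initial datum; being $H^s$-weak solutions, both satisfy the integral equation~\eqref{int eqn}, so $w:=\Phi-\Psi$ obeys
\[
w(t)=-i\int_0^t U(t-t')\bigl(Vw+f(\cdot,\Phi)-f(\cdot,\Psi)\bigr)(t')\,dt'.
\]
Writing $K:=\|\Phi\|_{L^\infty(-T_1,T_2;H^s)}+\|\Psi\|_{L^\infty(-T_1,T_2;H^s)}$, the goal is to show $w\equiv0$ on a subinterval whose length is bounded below in terms of $K$, $V$ and $b$ only; the maximal-interval continuation at the end of the proof of Proposition~\ref{1dunique} then upgrades this to uniqueness on all of $(-T_{min},T_{max})$.

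The analytic input is a homogeneous weighted Strichartz estimate $\bigl\|\,|x|^{-\delta}U(t)\varphi\bigr\|_{L^q_tL^r_\rho L^{\widetilde r}_\sigma}\lesssim\|\varphi\|_{L^2}$ for the weighted-admissible pair $(r,\widetilde r)$ attached to the time exponent $q$ and weight $\delta=\frac{n+2s}{q}-\frac n2$ --- proved by a spherical-harmonic decomposition of $\varphi$ and a $TT^\ast$/Stein--Weiss bound on each angular block, uniform in the block, the angular regularity being exactly what removes the derivative loss --- together with its retarded counterpart, obtained by duality and the Christ--Kiselev lemma, which also provides the companion bound in $L^\infty_tL^2$. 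Applying the retarded estimate to the Duhamel formula for $w$ over a short interval $[-t_1,t_2]$ and splitting $N(\cdot,\Phi)-N(\cdot,\Psi)=Vw+\bigl(f(\cdot,\Phi)-f(\cdot,\Psi)\bigr)$, one estimates the potential term by distributing the weight $|x|^{\pm\delta}$ and applying H\"older in the radial and angular variables, using exactly the hypothesis $|x|^{\delta}V\in L^{m_1}_\rho L^{\widetilde m_1}_\sigma$ with $\frac1{m_1}=\frac12-\frac1q$, $\frac1{\widetilde m_1}=\frac{2s/q-1/2}{n-1}$; the nonlinear term is bounded, via~\eqref{cond-f2}, by $b\,(|\Phi|^\ell+|\Psi|^\ell)|w|$, the factors $|\Phi|^\ell,|\Psi|^\ell$ being absorbed through the embedding $H^s\hookrightarrow L^{2n/(n-2s)}$ --- it is the $\ell$-th power here that produces the shift $\frac{\ell(n-2s)}{2n}$ defining $(m_2,\widetilde m_2)$ --- while $|x|^{\delta}b\in L^{m_2}_\rho L^{\widetilde m_2}_\sigma$ plays the role of $|x|^{\delta}V$. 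A H\"older estimate in time over $[-t_1,t_2]$ gives a factor $(t_1+t_2)^\theta$ with $\theta>0$ (choosing $q>2$, possible precisely under the stated constraint on $\ell$), so collecting terms yields
\[
\|w\|_{X([-t_1,t_2])}\le C(K)\,(t_1+t_2)^{\theta}\bigl(\|\,|x|^{\delta}V\,\|_{L^{m_1}_\rho L^{\widetilde m_1}_\sigma}+K^\ell\|\,|x|^{\delta}b\,\|_{L^{m_2}_\rho L^{\widetilde m_2}_\sigma}\bigr)\|w\|_{X([-t_1,t_2])},
\]
where $X$ denotes the weighted Strichartz norm together with $L^\infty_tL^2$. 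Since $\|w\|_{X([-T_1,T_2])}<\infty$ --- verified beforehand by applying the same estimates to the Duhamel formulas for $\Phi$ and $\Psi$ themselves, using $\varphi\in H^s\subset L^2$ and $\Phi,\Psi\in L^\infty(-T_1,T_2;H^s)$ --- this inequality forces $w\equiv0$ once $t_1+t_2$ is small, as desired, and the continuation argument finishes the proof.

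The principal obstacle is the weighted Strichartz estimate itself and the exponent bookkeeping surrounding it. Establishing $\bigl\|\,|x|^{-\delta}e^{it(-\Delta)^s}\varphi\bigr\|_{L^q_tL^r_\rho L^{\widetilde r}_\sigma}\lesssim\|\varphi\|_{L^2}$ \emph{without} loss of derivatives is delicate because the naive estimate does lose regularity (\cite{cox}); the remedy is to trade radial decay against angular integrability mode by mode, and the room for this trade is tightest when $n=2$, where the sphere is one-dimensional. This is exactly why the extra restriction $\ell<\ell_0=\frac{2s-1}{2s(1-s)}$ must be imposed by hand in that dimension, and in general one has to check that the window $\frac12+\frac{(n-1)\ell(n-2s)}{2n}\le\frac{2s}{q}\le s$ is nonempty, can be met with $q>2$, and lies inside the range of validity of the estimate; everything else is, as in Proposition~\ref{1dunique}, a routine Gronwall/continuation argument.
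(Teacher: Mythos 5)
Your proposal follows essentially the same route as the paper: the no-loss weighted Strichartz estimate $\||x|^{-\delta}U(\cdot)\varphi\|_{L^q_tL^q_\rho L^{\widetilde q}_\sigma}\lesssim\|\varphi\|_{L^2}$ applied to the Duhamel formula for $\Phi-\Psi$, H\"older in the radial/angular variables against $|x|^{\delta}V$ and $|x|^{\delta}b$ with the stated exponents, absorption of $|\Phi|^\ell,|\Psi|^\ell$ via $H^s\hookrightarrow L^{2n/(n-2s)}$, a short-time factor $(t_1+t_2)^{1-1/q}$, and the maximal-interval continuation from the 1-d case. The only cosmetic difference is that the paper's inhomogeneous estimate has $L^1_tL^2_x$ on the right and so follows from the homogeneous one by Minkowski without Christ--Kiselev; your explicit check that the weighted norm of $\Phi-\Psi$ is finite before absorbing is a point the paper leaves implicit, and is welcome.
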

\begin{proof}[Proof of Proposition \ref{unique}]


For the uniqueness we will use the following weighted Strichartz estimate (see for instance Lemma 6.2 of \cite{chho} and Lemma 2 of \cite{chhwoz}).
\begin{lemma}
Let $n \ge 2$ and $2 \le  q < 4s$. Then we have
\begin{align}\label{w-str}
\||x|^{-\delta}  U(\cdot)\varphi\|_{L^q(-t_1, t_2; L_\rho^qL_\sigma^{\widetilde q})} \le C\|\varphi\|_{L^2},
\end{align}
where $\delta = \frac{n+2s}{q}-\frac n2$, $\frac1{\widetilde q} = \frac12 - \frac1{n-1}\left(\frac{2s}{q} - \frac12\right) $ and $C$ is independent of $t_1, t_2$.
\end{lemma}
In \cite{chho} it was shown that $$\||x|^{-\delta} D_\sigma^{\frac{2s}q-\frac12} U(\cdot)\varphi\|_{L^q(-t_1, t_2; L_\rho^qL_\sigma^2)} \le C\|\varphi\|_{L^2}.$$ The inequality \eqref{w-str} can be derived by Sobolev embedding on the unit sphere. Here $D_\sigma = \sqrt{1 - \Delta_\sigma}$, $\Delta_\sigma$ is the Laplace-Beltrami operator on the unit sphere.



We first consider the 2-d case. From \eqref{w-str} one can readily deduce that
\begin{align}\label{w-str-inhomo}
\||x|^{-\delta}  \int_0^t U(t-t')g(t')\|_{L^{q}(-t_1,t_2; L_\rho^{q} L_\sigma^{\widetilde q})} \lesssim \|g\|_{L^1(-t_1,t_2; L^2)}
\end{align}
Set $g = N(x, \p) - N(x, \Psi)$. Then from \eqref{int eqn} we have
\begin{align*}
&\||x|^{-\delta} (\p - \Psi)\|_{L^{q}_{(-t_1,t_2)}L_\rho^{q} L_\sigma^{\widetilde q}}\\
&\lesssim \int_{-t_1}^{t_2}\|N(\cdot, \p) - N(\cdot, \Psi)\|_{L^2}\,dt'\\
&\lesssim \int_{-t_1}^{t_2}\|V(\p - \Psi)\|_{L^2}\,dt' + \int_{-t_1}^{t_2}\|b(\cdot)(|\p|^\ell + |\Psi|^\ell)|\p - \Psi|\|_{L^2}\,dt'.
\end{align*}
By H\"{o}lder's inequality with $\frac12 = \frac1{q} + \frac1{m_1} = \frac1{\widetilde q} + \frac1{\widetilde m_1}$ and $\frac12 = \frac1{m_2} + \frac{\ell(n-2s)}{2n} + \frac1q = \frac1{\widetilde m_2} + \frac{\ell(n-2s)}{2n} + \frac1{\widetilde q}$ we have
 \begin{align*}
&\||x|^{-\delta} (\p - \Psi)\|_{L^{q}_{(-t_1,t_2)}L_\rho^{q} L_\sigma^{\widetilde q}} \\
&\lesssim \||x|^{\delta}V\|_{L_\rho^{m_1}L_\sigma^{\widetilde m_1}}\int_{-t_1}^{t_2} \||x|^{-\delta_1}(\p-\Psi)\|_{L_\rho^{q}L_\sigma^{\widetilde q}}\,dt'\\
 &\;\;+ \int_{-t_1}^{t_2}\||\cdot|^{\delta}b\|_{L_\rho^{m_2}L_\sigma^{\widetilde m_2}}(\|\p\|_{L^\frac{2}{1-s}}^\ell + \|\Psi\|_{L^\frac{2}{1-s}}^\ell)\||x|^{-\delta}(\p - \Psi)\|_{L_\rho^qL_\sigma^{\widetilde q}}\,dt'.
\end{align*}
Our $q$ and $\ell$ for $n = 2$ guarantee the well-definedness of the H\"{o}lder exponents $m_1, \widetilde m_1, m_2, \widetilde m_2$.

If $(-t_1, t_2) \subset [-T_1, T_2]$ and $\|\p\|_{L^\infty(-T_1, T_2; H^s)} + \|\Psi\|_{L^\infty(-T_1, T_2; H^s)} \le K$, then by Sobolev's and H\"{o}lder's inequalities we have
 \begin{align*}
&\||x|^{-\delta} (\p - \Psi)\|_{L^q_{(-t_1,t_2)}L_\rho^q L_\sigma^{\widetilde q}} \le C_{V,f, K} (t_1+t_2)^{1-\frac1q}\||x|^{-\delta} (\p - \Psi)\|_{L^q_{(-t_1,t_2)}L_\rho^q L_\sigma^{\widetilde q}},
\end{align*}
where $C_{V, f, K} = C\left(\|\cdot|^\delta V\|_{L_\rho^{m_1}L_\sigma^{\widetilde m_1}} + \||\cdot|^{\delta}b\|_{L_\rho^{m_2}L_\sigma^{\widetilde m_2}}(2K^\ell)\right)$. So $\p = \Psi$ on $[-t_1, t_2]$ for sufficiently small $t_1, t_2$. By the same argument as in 1-d case we can extend this uniqueness to $(-T_{min}, T_{max})$.

We can proceed with the almost same way as the proof of uniqueness for high-d case. The only difference is the range of $\ell$. For the proof we need $\frac1{m_2}, \frac1{\widetilde m_2} \ge 0$, for which we must have
$$
\ell \le \frac{n(2s-1)}{(n-1)(n-2s)},\quad \ell < \frac{n(2s-1)}{2s(n-2s)},\quad n \ge 2,
$$
respectively. In 2-d, the former is bigger than the latter and vice versa in high-d.
\end{proof}

\subsubsection{Unconditional uniqueness}
If we restrict $q$ in Proposition \ref{unique}, then we can get the unconditional uniqueness as follows.
\begin{corollary}\label{2dcor}
If $n = 2$, $\frac1{4s} < \frac1q \le \min\left(\frac12 - \frac{\ell(1-s)}{s}, \frac1{1+2s}\right)$, then the uniqueness as in Proposition \ref{unique} occurs in $C((-T_{min}, T_{max}); H^s)$.
\end{corollary}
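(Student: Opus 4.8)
The plan is to upgrade the conditional statement of Proposition \ref{unique} to an unconditional one by showing that, for $q$ in the stated range, the auxiliary weighted space in which that proposition closes its contraction already contains all of $C((-T_{min},T_{max});H^s)$. Concretely, the proof of Proposition \ref{unique} used only three facts about the two competing solutions $\p,\Psi$: (i) they satisfy the Duhamel formula \eqref{int eqn}; (ii) $\|\p\|_{L^\infty(-T_1,T_2;H^s)},\|\Psi\|_{L^\infty(-T_1,T_2;H^s)}\le K$; and (iii) $|x|^{-\delta}\p,\;|x|^{-\delta}\Psi\in L^q_{loc}(L^q_\rho L^{\widetilde q}_\sigma)$ — it is (iii) that makes the term $C_{V,f,K}(t_1+t_2)^{1-1/q}\||x|^{-\delta}(\p-\Psi)\|_{L^q_{(-t_1,t_2)}L^q_\rho L^{\widetilde q}_\sigma}$ absorbable on the left. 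So it suffices to show that every $\p\in C((-T_{min},T_{max});H^s)$ solving \eqref{main eqn} automatically has (i) and (iii); this is precisely what the restricted range of $q$ buys.

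The key point is a pointwise-in-time weighted Hardy--Sobolev embedding. For $n=2$ one has $\delta=\frac{2+2s}{q}-1$ and $\frac1{\widetilde q}=1-\frac{2s}{q}$, so the restriction $\frac1q\le\frac1{1+2s}$ gives $\frac1{\widetilde q}-\frac1q=1-\frac{2s+1}{q}\ge 0$, i.e. $\widetilde q\le q$; since $S^1$ has finite measure, Hölder in the angular variable then yields $\||x|^{-\delta}u\|_{L^q_\rho L^{\widetilde q}_\sigma}\lesssim\||x|^{-\delta}u\|_{L^q(\mathbb R^2)}$. Moreover $\delta q=2+2s-q<2$ (as $q\ge 1+2s>2s$) and $q>2$ (as $q\ge 1+2s>2$ for $s>\tfrac12$); splitting $\mathbb R^2=\{|x|\le1\}\cup\{|x|>1\}$, using $|x|^{-\delta}\in L^{q_1}(|x|\le1)$ for a suitable $q_1<2/\delta$ together with $H^s(\mathbb R^2)\hookrightarrow L^{q_2}$, $2<q_2<\tfrac2{1-s}$, $\tfrac1{q_1}+\tfrac1{q_2}=\tfrac1q$ (solvable exactly because $q>2$), and $q<4s<\tfrac2{1-s}$ in the far field, one obtains $\||x|^{-\delta}u\|_{L^q(\mathbb R^2)}\lesssim\|u\|_{H^s}$. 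Hence $\||x|^{-\delta}\p(t)\|_{L^q_\rho L^{\widetilde q}_\sigma}\lesssim\|\p(t)\|_{H^s}\le K$ for a.e. $t$, so $|x|^{-\delta}\p\in L^\infty_{loc}\subset L^q_{loc}$ in time with values in $L^q_\rho L^{\widetilde q}_\sigma$, which is (iii). Inserting the same bound into the mixed-norm Hölder inequalities of Proposition \ref{unique} (this is where the second restriction $\frac1q\le\frac12-\frac{\ell(1-s)}{s}$ enters, keeping the exponents $m_2,\widetilde m_2$ paired with $|x|^\delta b$ admissible once $\widetilde q$ is forced $\le q$) gives $\|N(\cdot,\p(t))\|_{L^2}\lesssim\big(\||x|^\delta V\|_{L^{m_1}_\rho L^{\widetilde m_1}_\sigma}+\||x|^\delta b\|_{L^{m_2}_\rho L^{\widetilde m_2}_\sigma}K^\ell\big)\|\p(t)\|_{H^s}$, so $N(\cdot,\p)\in L^\infty_{loc}(L^2)$; since $\p\in C(H^s)$ solves \eqref{main eqn} in $H^{-s}$, the Duhamel formula \eqref{int eqn} follows exactly as in the proof of Proposition \ref{1dunique}, which is (i).

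With (i)--(iii) available for every $C((-T_{min},T_{max});H^s)$ solution, the contraction in the proof of Proposition \ref{unique} applies verbatim to any such pair $\p,\Psi$ with common data: on a short interval $(-t_1,t_2)$ both weighted norms are finite and bounded by $(t_1+t_2)^{1/q}K$, so the factor $(t_1+t_2)^{1-1/q}$ forces $\p=\Psi$ there, and the maximality/continuation argument of Proposition \ref{1dunique} extends this to all of $(-T_{min},T_{max})$; membership in $C(H^s)$ then promotes $\p=\Psi$ from a.e. $t$ to every $t$. I expect the main obstacle to be the weighted embedding step: controlling the origin singularity of $|x|^{-\delta}$ (which is what compels $q>2s$, hence the upper bound on $\frac1q$) together with its interplay with the angular $L^{\widetilde q}_\sigma$-norm, and then checking that the narrowed $(q,\ell)$ range keeps every Hölder triple used in Proposition \ref{unique} admissible; the rest is routine bookkeeping together with repetition of the Proposition \ref{unique}/Proposition \ref{1dunique} arguments.
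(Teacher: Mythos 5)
Your proposal is correct and follows the same strategy as the paper: reduce the corollary to the embedding $C((-T_{min},T_{max});H^s)\subset L^q_{loc}\bigl(|x|^{\frac{2+2s}{q}-1}L^q_\rho L^{\widetilde q}_\sigma\bigr)$, obtain $\widetilde q\le q$ from $q\ge 1+2s$ so that H\"older on the sphere passes the mixed norm to $L^q_x$, and then prove $\||x|^{-\delta}u\|_{L^q_x}\lesssim\|u\|_{H^s}$; the continuation argument then runs as in Propositions \ref{1dunique} and \ref{unique}. The one point where you genuinely diverge is the last estimate: the paper quotes the Hardy--Sobolev inequality $\||x|^{-\alpha}u\|_{L^p}\lesssim\|u\|_{\dot H^{\alpha+n(\frac12-\frac1p)}}$ (proved by interpolating the $\dot H^\alpha$ bound with the BMO endpoint), applied with $\alpha=\delta$, $p=q$ to get $\||x|^{-\delta}u\|_{L^q}\lesssim\|u\|_{\dot H^{2s/q}}\lesssim\|u\|_{H^s}$, whereas you split $\mathbb R^2$ into $\{|x|\le 1\}$ and $\{|x|>1\}$ and close with plain H\"older (using $\delta q<2$ near the origin) and the subcritical Sobolev embeddings (using $2<q<4s<\frac{2}{1-s}$). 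Your version is more elementary and self-contained but only yields the inhomogeneous bound, while the paper's scale-invariant $\dot H^{2s/q}$ estimate is sharper; for the purpose of this corollary the two are interchangeable, since only the $L^\infty_t H^s$ bound on the solution is available. Your additional bookkeeping (verifying the Duhamel formula and that $N(\cdot,\Phi)\in L^\infty_{loc}L^2$, then rerunning the contraction) is exactly what the paper leaves implicit in the phrase ``we have only to show that.''
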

\begin{proof}[Proof of Corollary \ref{2dcor}]
In view of the proof of Proposition \ref{unique}, we have only to show that $$C((-T_{min}, T_{max}); H^s) \subset L_{loc}^q(-T_{min}, T_{max};|x|^{\frac{2+2s}{q}-1} L_\rho^q L_\sigma^{\widetilde q}),$$ where $\frac1{\widetilde q} = \frac12 - \frac1{n-1}\left(\frac{2s}{q} - \frac12\right)$. Since $q \ge 1 + 2s$,  $\widetilde q \le q$ and thus
$$ L_{loc}^q(-T_{min}, T_{max}; |x|^{\frac{2+2s}{q}-1} L_x^q) \subset L_{loc}^q(-T_{min}, T_{max}; |x|^{\frac{2+2s}{q}-1}L_\rho^q L_\sigma^{\widetilde q}).$$
Using Hardy inequality\footnote{Such inequality can be shown by the interpolation between the estimates $\||x|^{-\alpha}\,u\|_{L^2} \lesssim \|u\|_{\dot H^\alpha}$ and $\|u\|_{BMO} \lesssim \|u\|_{\dot H^\frac n2}$.} that $\||x|^{-\alpha}\,u\|_{L^p} \lesssim \|u\|_{\dot H^{\alpha + n(\frac12-\frac1p)}}$ for $0 < \alpha < \frac np$ and $2 \le p < \infty$, since $1 - \frac{2+2s}{q} < 0$ and $q \ge 2 > 2s$ for the above $q$, we get
$$
\||x|^{1-\frac{2+2s}{q}} u\|_{L_x^q} \lesssim \|u\|_{\dot H^\frac{2s}{q}} \lesssim \|u\|_{H^s}.
$$
By this we deduce that $H^s \subset |x|^{\frac{2+2s}{q}-1}L_x^q$. This completes the proof of corollary.
\end{proof}

By exactly the same way, we have the following.
\begin{corollary}\label{ndcor}
If $n \ge 3$, $\max\left(\frac12,  \frac{ns}{n+2s}, \frac12 + \frac{(n-1)\ell(n-2s)}{2n}\right) < \frac{2s}{q} \le \frac{ns}{n+2s-1}$ and $0 < \ell \le \frac{n(2s-1)}{(n-2s)(n+2s-1)}$, then the uniqueness as in Proposition \ref{unique} occurs in $C((-T_{min}, T_{max}); H^s)$.
\end{corollary}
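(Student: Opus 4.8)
The plan is to repeat the argument of Corollary~\ref{2dcor} with the exponents adjusted to general $n\ge 3$. Proposition~\ref{unique} already furnishes uniqueness within the class of $H^s$-weak solutions $\p$ satisfying $|x|^{-\delta}\p\in L^q_{loc}(-T_{min},T_{max};L_\rho^q L_\sigma^{\widetilde q})$, where $\delta=\frac{n+2s}{q}-\frac n2$ and $\frac1{\widetilde q}=\frac12-\frac1{n-1}\bigl(\frac{2s}{q}-\frac12\bigr)$. So it suffices to establish the embedding
$$
C\bigl((-T_{min},T_{max});H^s\bigr)\subset L^q_{loc}\bigl(-T_{min},T_{max};|x|^{\delta}L_\rho^q L_\sigma^{\widetilde q}\bigr),
$$
since then every solution in $C((-T_{min},T_{max});H^s)$ lies automatically in the class covered by Proposition~\ref{unique}. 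Integrating in $t$ over an arbitrary bounded subinterval, this reduces to the purely spatial inequality $\||x|^{-\delta}u\|_{L_\rho^q L_\sigma^{\widetilde q}}\lesssim\|u\|_{H^s}$ for $u\in H^s$.

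First I would check that $\widetilde q\le q$: a direct computation shows $\frac1{\widetilde q}\ge\frac1q$ is equivalent to $\frac{2s}{q}\le\frac{ns}{n+2s-1}$, which is the upper-bound hypothesis on $\frac{2s}{q}$. Since $S^{n-1}$ has finite measure this gives $L^q(S^{n-1})\hookrightarrow L^{\widetilde q}(S^{n-1})$, and because $|x|^{-\delta}$ is radial it is enough to prove $\||x|^{-\delta}u\|_{L^q_x}\lesssim\|u\|_{H^s}$. For this I would invoke, exactly as in Corollary~\ref{2dcor}, the Hardy--Sobolev inequality $\||x|^{-\alpha}u\|_{L^q}\lesssim\|u\|_{\dot H^{\alpha+n(\frac12-\frac1q)}}$, valid for $0<\alpha<\frac nq$ and $2\le q<\infty$. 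Taking $\alpha=\delta$, the Sobolev index on the right collapses to $\delta+n(\frac12-\frac1q)=\frac{2s}{q}$, and since $s>\frac12$ forces $\frac{2s}{q}\le\frac{ns}{n+2s-1}\le s$, we obtain $\||x|^{-\delta}u\|_{L^q_x}\lesssim\|u\|_{\dot H^{2s/q}}\lesssim\|u\|_{H^s}$. The hypotheses of the Hardy--Sobolev inequality are precisely the remaining restrictions: $\delta>0$ is the condition $\frac{2s}{q}>\frac{ns}{n+2s}$, while $\delta<\frac nq$ and $q\ge 2$ follow from $\frac{2s}{q}\le\frac{ns}{n+2s-1}\le s$ together with $n\ge 3$ (so $\frac{4s}{n}<2\le q$).

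It remains to identify the two hypotheses not yet used. The lower bound $\frac{2s}{q}>\frac12$ ensures $q<4s$, which is what the weighted Strichartz estimate \eqref{w-str} behind Proposition~\ref{unique} requires; and $\frac{2s}{q}>\frac12+\frac{(n-1)\ell(n-2s)}{2n}$ is exactly the admissibility of the H\"older exponents $m_2,\widetilde m_2$ inherited from Proposition~\ref{unique}. Finally, the restriction $0<\ell\le\frac{n(2s-1)}{(n-2s)(n+2s-1)}$ is what makes the admissible interval for $\frac{2s}{q}$ nonempty: a short computation shows $\frac12+\frac{(n-1)\ell(n-2s)}{2n}\le\frac{ns}{n+2s-1}$ is equivalent to exactly this bound, whereas the other two left endpoints $\frac12$ and $\frac{ns}{n+2s}$ are automatically $<\frac{ns}{n+2s-1}$ when $s>\frac12$. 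There is no genuine analytic difficulty here; the entire content is the arithmetic of exponents, and the one step requiring real care is verifying that a single choice of $q$ can satisfy simultaneously $\widetilde q\le q$, $0<\delta<\frac nq$, $\frac{2s}{q}\le s$, $2\le q<4s$, and the $m_2,\widetilde m_2$-admissibility from Proposition~\ref{unique} — with the stated $\ell$-range being precisely the range for which this is possible.
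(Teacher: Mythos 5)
Your argument is correct and is essentially the paper's own: the paper proves Corollary \ref{ndcor} by declaring it "exactly the same" as Corollary \ref{2dcor}, i.e.\ reduce to the spatial embedding $H^s\subset |x|^{\delta}L_\rho^qL_\sigma^{\widetilde q}$ via $\widetilde q\le q$ (equivalent to $\frac{2s}{q}\le\frac{ns}{n+2s-1}$) and the Hardy--Sobolev inequality with $\alpha=\delta$, which is precisely what you do. Your explicit verification of the exponent arithmetic (positivity of $\delta$, $\delta<\frac nq$, $\frac{2s}{q}\le s$, $q<4s$, admissibility of $m_2,\widetilde m_2$, and nonemptiness of the $q$-interval under the stated $\ell$-bound) supplies details the paper omits but introduces nothing new in approach.
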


\subsection{Well-posedness}\label{wp}
By using the argument of \cite{caz} one can show that the uniqueness implies actually well-posedness and conservation laws:
\begin{align*}
&\bullet\; \p \in C(-T_{min}, T_{max}; H^s) \cap C^1(-T_{min}, T_{max}; H^{-s}), \\
&\bullet\; \p\;\;\mbox{depends continuously on}\;\; \varphi\;\;\mbox{in}\;\;H^s,\\
&\bullet\; M(\p(t)) = M(\varphi)\;\;\mbox{ and}\;\;J(\p(t)) = J(\varphi)\;\; \forall\; t \in (-T_{min}, T_{max}).
\end{align*}
We leave the details to the readers. In this section we remark on the global well-posedness.

\subsubsection{Remarks on global well-posedness}
We discussed that the conditions on $V$ and $f$ of Propositions \ref{ws1}-\ref{unique}  give the uniqueness and local well-posedness of \eqref{main eqn}. In this section we study some conditions guaranteeing the global well-posedness.


\begin{enumerate}
\item If $f(x, |\tau|) \le 0$ for all $x$ and $\tau$, and $\p$ is the unique solution to \eqref{main eqn}, then
\begin{align*}
J(\varphi) = J(\p) \ge \frac12\|(\Delta)^\frac s2 \p\|_{L^2}^2 - \|V\|_{L^{p_1} + L^{p_2}}\|\p\|_{L^{2p_1'} + L^{2p_2'}}.
\end{align*}
Since $2 < 2p_i' < s^*$, from Gagiliardo-Nirenberg inequality it follows that for some $0 < \theta_i < 1$
\begin{eqnarray*}
&J(\varphi) \ge \frac12\|(\Delta)^\frac s2 \p\|_{L^2}^2\\
 &\quad - C\|V\|_{L^{p_1} + L^{p_2}}(\|\varphi\|_{L^2}^{2(1-\theta_1)}\|(-\Delta)^\frac s2\p\|_{L^2}^{2\theta_1} + \|\varphi\|_{L^2}^{2(1-\theta_2)}\|(-\Delta)^\frac s2\p\|_{L^2}^{2\theta_2}).
\end{eqnarray*}
By Young's inequality we get
$\|\p(t)\|_{H^s}^2 \le C_V\|\varphi\|_{L^2}^2 + 4J(\varphi)$ for all $t \in (-T_{min}, T_{max})$. The continuity argument implies the global well-posedness that $T_{min} = T_{max} = \infty$.

\item If $\ell < \frac{4s}{n}$ and $q_i > \frac{2n}{4s-n\ell} (> \frac{2n}{2n-(\ell+2)(n-2s)})$, then Gagliardo-Nirenberg inequality gives the uniform bound of $\|\p(t)\|_{H^s}$. More precisely,
\begin{align*}
J(\varphi) = J(\p) &\ge \frac12\|(-\Delta)^\frac s2 \p\|_{L^2}^2 - \|V\|_{L^{p_1} + L^{p_2}}\|\p\|_{L^{2p_1'} + L^{2p_2'}}^2\\
 &\qquad - \|a\|_{L^{q_1} + L^{q_2}}\|\p\|_{L^{(\ell+2)q_1'} + L^{(\ell+2)q_2'}}^{\ell+2}.
\end{align*}
Since $q_i > \frac{2n}{4s-n\ell}$, $2 < (\ell+2)q_i' < \frac{2n}{n-2s}$. As above we get $\|\p(t)\|_{H^s}^2 \le C_{V, a}\|\varphi\|_{L^2}^2 + 4J(\varphi)$ for all $t \in (-T_{min}, T_{max})$ and thus global well-posedness.

\item If $n = 1$, $\frac12 < s < 1$, $V = 0$ and $f(x, \p) = \lambda |\p|^2\p$, then in \cite{guhuo} the authors showed global well-posedness in $L^2$ by using Bourgain space argument. For another global well-posedness we refer the reader to \cite{iopu}, where a problem with $s = \frac14$ and combined cubic nonlinearity is treated.

\item If $\ell = \frac{4s}{n}$, then since
\begin{align*}
J(\varphi) = J(\p) &\ge \frac12\|(-\Delta)^\frac s2 \p\|_{L^2}^2 - \|V\|_{L^{p_1} + L^{p_2}}\|\p\|_{L^{2p_1'} + L^{2p_2'}}^2\\
 &\qquad - \|a\|_{L^\infty}\|\varphi\|_{L^2}^\frac{4s}n\|(-\Delta)^\frac s2\p\|_{L^2}^2,
\end{align*}
we have $\|\p(t)\|_{H^s}^2 \le C\|\varphi\|_{L^2}^2 + 4J(\varphi)$ for $\|a\|_{L^\infty}\|\varphi\|_{L^2}^\frac{4s}n < \frac18$.
\item If $q$ and $\ell$ satisfies the condition of Corollary \ref{2dcor} and \ref{ndcor}, then the well-posedness is unconditional.
\end{enumerate}

\subsubsection{Well-posedness of radial solutions}
From now on we consider the well-posedness of radial solutions to \eqref{main eqn} when $\ell \le \frac{4s}{n-2s}$.
In \cite{chho} the authors considered the well-posedness for Hartree type nonlinearity by using various Strichartz estimates. Indeed, they utilized weighted or angularly regular Strichartz estimate to control the Hartree type nonlinearity. However, if the power type nonlinearity $f$ is involved, then the situation is quite different. It is not easy to handle angular regularity for which we need a high regularity of $f$. To avoid this we assume the radial symmetry of $f$ and initial data.

Let us introduce radial Strichartz estimate of $U(t)$ (see \cite{cholee}): for $\frac{n}{2n-1} \le s < 1$, $2 \le q \le \infty, 2 \le r < \infty$ with $\frac{2s}{q} + \frac nr = \frac n2$ and $(q, r) \neq (2, \frac{4n-2}{2n-3})$
\begin{align}
\|U(\cdot)\varphi\|_{L^q(-T_1, T_2; L_x^r)} \lesssim \|\varphi\|_{L^2}.\label{homo}
\end{align}
We call such pair $(q, r)$ $s$-admissible one. The constant involved in \eqref{homo} is independent of $T_1$, $T_2$. The estimate \eqref{homo} can be extended to Besov type as follows:
\begin{align}
\|U(\cdot)\varphi\|_{L^q(-T_1, T_2; B_{r}^s)} \lesssim \|\varphi\|_{H^s}.\label{homo-bes}
\end{align}
Here $B_r^s = B_{r, 2}^s$ is the inhomogeneous Besov space. Using Christ-Kiselev lemma we get the inhomogeneous Strichartz estimates:
Let $(q, r)$ and $(\widetilde q, \widetilde r)$ be $s$-admissible pairs with $q > \widetilde q'$. Then
\begin{align}
\|\int_0^t U(t-t')g(t')\,dt'\|_{L^q(-T_1, T_2; B_{r}^s)} \lesssim \|g\|_{L_T^{\widetilde q'}B_{\widetilde r'}^s}.\label{inhomo}
\end{align}

Under the fractional and power type setting, an alternative Besov norm is useful, which is stated as follows: for $0 < s < 1$, $1 \le r < \infty$
\begin{align}\label{bes}
\|g\|_{B_{r}^s} \sim \|g\|_{L^r} + \left(\int_0^\infty (\alpha^{-s}\sup_{|y|\le \alpha}\|g(\cdot+y)-g(\cdot)\|_{L^r})^2\,\frac{d\alpha}{\alpha}\right)^\frac12.
\end{align}

The following is the local well-posedness result.
\begin{proposition}\label{lwp-radial}
Suppose that $V(x) = V(|x|) \in L^\infty$, $(-\Delta)^\frac s2V \in L^\frac ns$, $f(x, z) = f(|x|, z)$, $a, b \in L^\infty$ and $\varphi(x) = \varphi(|x|) \in H^s$. Let $$
r_0 = \frac{n(\ell+2)}{n+s\ell}, \quad q_0 = \frac{4s(\ell+2)}{\ell(n-2s)}$$
for $\frac{n}{2n-1} \le s < 1$, and $0 < \ell \le \frac{4s}{n-2s}$. Then there exists $T_1, T_2 > 0$ such that \eqref{main eqn} has a unique radial solution $u \in C([-T_1,T_2]; H^s) \cap L^{q_0}(-T_1, T_2; B_{r_0}^s)$.
\end{proposition}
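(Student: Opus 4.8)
The plan is to construct the solution by a contraction–mapping argument on the Duhamel formulation
\[
\Phi u(t) = U(t)\varphi - i\int_0^t U(t-t')\,N(\cdot, u(t'))\,dt', \qquad N(x,u) = V(x)u + f(x,u),
\]
in the ball
\[
X = \Big\{ u \in C([-T_1,T_2];H^s)\cap L^{q_0}(-T_1,T_2;B^s_{r_0}) \ :\ \|u\|_{L^\infty_t H^s} + \|u\|_{L^{q_0}_t B^s_{r_0}} \le 2C\|\varphi\|_{H^s}\Big\},
\]
metrized by the \emph{weaker} distance $d(u,v) = \|u-v\|_{L^\infty_t L^2} + \|u-v\|_{L^{q_0}_t L^{r_0}}$. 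The weak metric is essentially forced: $f$ obeys only the weighted Lipschitz bound \eqref{cond-f2}, so $u\mapsto f(\cdot,u)$ is not Lipschitz at the top regularity level and one cannot contract in the $B^s_{r_0}$ norm; a ball of the strong norm is nevertheless complete for $d$, which is the standard Kato--Cazenave device. Since $U(t)$, $V=V(|x|)$ and $f(x,z)=f(|x|,z)$ are all radial, $\Phi$ preserves radial symmetry, so the solution produced is radial.

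The first step is to record that $(q_0,r_0)$ is $s$-admissible: a direct computation gives
\[
\frac{2s}{q_0} + \frac{n}{r_0} = \frac{\ell(n-2s)}{2(\ell+2)} + \frac{n+s\ell}{\ell+2} = \frac n2,
\]
and the hypothesis $0<\ell\le\frac{4s}{n-2s}$ together with $\frac{n}{2n-1}\le s<1$ forces $2\le q_0<\infty$, $2\le r_0<\infty$ and $(q_0,r_0)\neq(2,\frac{4n-2}{2n-3})$; moreover $B^s_{r_0}$ scales like $\dot H^s$ and $r_0$ is the Lebesgue exponent at which $\||u|^{\ell+1}\|$ balances. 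Then \eqref{homo-bes} gives $\|U(\cdot)\varphi\|_{L^{q_0}_tB^s_{r_0}}\le C\|\varphi\|_{H^s}$, the elementary estimate $\|\int_0^t U(t-t')g\,dt'\|_{L^\infty_tH^s}\le \|g\|_{L^1_tH^s}$ handles the $L^\infty_tH^s$ component, and \eqref{inhomo} (with auxiliary $s$-admissible pairs $(\widetilde q,\widetilde r)$, $q_0>\widetilde q'$) reduces the whole fixed-point argument to a nonlinear bound of the form $\|N(\cdot,u)-N(\cdot,v)\|_{L^{\widetilde q'}_tB^s_{\widetilde r'}}\le \eta(T_1,T_2)\,\Theta(u,v)\,d(u,v)$ with $\Theta$ bounded on $X$, where when $\ell<\frac{4s}{n-2s}$ the factor $\eta$ is a positive power of $T_1+T_2$ produced by Hölder in time, while in the critical case $\ell=\frac{4s}{n-2s}$ one takes $\eta$ constant and instead extracts smallness from $\|U(\cdot)\varphi\|_{L^{q_0}_{(-T_1,T_2)}B^s_{r_0}}\to0$ as $T_1,T_2\to0$ (dominated convergence, since this quantity is finite over all of $\mathbb R$).

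The nonlinear estimate splits. The potential term is the easy part: using the endpoint pair $(\infty,2)$ (so $\widetilde q'=1$), it reduces to $\|Vu-Vv\|_{L^1_tL^2}$ and $\|Vu\|_{L^1_tH^s}$; the fractional Leibniz rule with $V\in L^\infty$ and $(-\Delta)^{s/2}V\in L^{n/s}$ gives $\|Vw\|_{H^s}\lesssim \|V\|_{L^\infty}\|w\|_{H^s}+\|(-\Delta)^{s/2}V\|_{L^{n/s}}\|w\|_{L^{s^*}}$ with $s^*=\frac{2n}{n-2s}$ and $H^s\hookrightarrow L^{s^*}$, so Hölder in $t$ supplies the gain $T_1+T_2$ and $V$ contributes correctly to both ``$\Phi$ maps $X$ into $X$'' and ``$\Phi$ contracts in $d$''. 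The power term is where the $B^s_{r_0}$-Strichartz space is essential, since for $\ell$ near $\frac{4s}{n-2s}$ the function $f(\cdot,u(t))$ need not lie in $L^2_x$ at a fixed time: choosing a suitable dual $s$-admissible pair $(\widetilde q,\widetilde r)$ (with room to spare when $\ell$ is strictly subcritical) and combining \eqref{cond-f}, \eqref{cond-f2} with $b\in L^\infty$, the difference-quotient characterization \eqref{bes} of the Besov norm in place of a fractional chain rule, the radial symmetry $f(x,z)=f(|x|,z)$, the Sobolev embedding $B^s_{r_0}\hookrightarrow L^{\sigma}$ (with $\frac1\sigma=\frac1{r_0}-\frac sn$) applied $\ell$ times, and Hölder in time, one obtains
\[
\|f(\cdot,u)-f(\cdot,v)\|_{L^{\widetilde q'}_tB^s_{\widetilde r'}}\lesssim \eta(T_1,T_2)\big(\|u\|_X^\ell+\|v\|_X^\ell\big)\,d(u,v).
\]
Taking the radius of $X$ of size $\sim\|\varphi\|_{H^s}$ and $T_1,T_2$ small then makes $\Phi$ a contraction on $(X,d)$; its fixed point $u$ solves \eqref{main eqn}, inserting it back into the Strichartz estimates shows $u\in C([-T_1,T_2];H^s)$ (continuity in $t$ by dominated convergence applied to the Duhamel term in the Strichartz norms), and uniqueness in $C_tH^s\cap L^{q_0}_tB^s_{r_0}$ follows from the same estimate on small subintervals plus a continuation argument as in the proof of Proposition \ref{1dunique}.

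The main obstacle is the $s$-order Besov bound for the power nonlinearity $f(\cdot,u)$: because $f$ is assumed only to satisfy \eqref{cond-f2} and not to be $C^1$ in $z$ with a controlled second derivative, the usual fractional chain rule is unavailable and one must argue directly with \eqref{bes}, carefully using \eqref{cond-f2}, the boundedness of $b$, and radial symmetry, while simultaneously matching the Lebesgue and Besov exponents that arise to genuine $s$-admissible pairs throughout the range $0<\ell\le\frac{4s}{n-2s}$ — which is precisely where the explicit values of $q_0,r_0$ and the subcritical/critical dichotomy come in.
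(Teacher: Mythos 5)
Your proposal is correct and follows essentially the same route as the paper's proof: a contraction on a ball of $L^\infty_tH^s\cap L^{q_0}_tB^s_{r_0}$ equipped with the weaker metric $\|\cdot\|_{L^\infty_tL^2\cap L^{q_0}_tL^{r_0}}$, the radial Besov--Strichartz estimates \eqref{homo-bes}--\eqref{inhomo}, the fractional Leibniz rule for the $V$-term, the difference-quotient characterization \eqref{bes} together with \eqref{cond-f}--\eqref{cond-f2} to get $\|f(\cdot,u)\|_{B^s_{r_0'}}\lesssim\|u\|_{B^s_{r_0}}^{\ell+1}$, and H\"older in time with the same subcritical/critical dichotomy for obtaining smallness. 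The only cosmetic difference is that the paper takes the dual pair $(q_0',r_0')$ directly rather than a generic auxiliary admissible pair.
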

The pair $(q_0, r_0)$ is $s$-admissible one and $q_0 > 2$.

\begin{proof}[Proof of Proposition \ref{lwp-radial}]
For simplicity we only consider the well-posednss on $[0, T]$.
Let $(X_T^\rho, d_X)$ be a metric space with metric $d_X$
defined by
\begin{align*}
&X_T^\rho = \{\p \in L_T^\infty H^s\, \cap\, L_T^{q_0}B_{r_0}^s :\;\;\p\;\mbox{is radial and}\;
\|\p\|_{L_T^\infty H^s \,\cap\, L_T^{q_0}B_{r_0}^s} \le \rho\},\\
&\qquad\qquad\qquad\qquad\qquad d_X(\p, \Psi) = \|\p -
\Psi\|_{L_T^\infty L^2 \cap L_T^{q_0}L^{r_0}}.
\end{align*}
$L_T^q\mathcal B$ denotes $L_t^q([0,T]; \mathcal B)$ for some positive $T$ and Banach space $\mathcal B$.
Since $H^s$ and $B_{r_0}^s$ are reflexive Banach space, one can readily show that $X_T^\rho$ is complete.
We define a mapping $\mathcal N$ on $X_T^\rho$ by
\begin{align}\label{nonlinear func}
\mathcal N(\p)(t) = U(t)\varphi - i\int_0^t U(t-t')[N(\cdot, \p)](t')\,dt'.
\end{align}
We use the standard contraction mapping argument. For any $\p \in X_T^\rho$ we have from \eqref{homo-bes} and \eqref{inhomo} with $q_0, r_0$ that
\begin{align}\label{nonlinear}
\|\mathcal N(\p)\|_{L_T^\infty H^s \cap L_T^{q_0}B_{r_0}^s} \lesssim \|U(\cdot)\varphi\|_{L_T^{q_0}B_{r_0}^s} + \|V\p\|_{L_T^1 H^s} + \|f(\cdot, \p)\|_{L_T^{q_0'} B_{r_0'}^s}.
\end{align}

From the fractional Leibniz rule, we have
\begin{align}\begin{aligned}\label{est-V}
\|V\p\|_{L_T^1 H^s} &\lesssim T(\|V\|_{L^\infty}\|\p\|_{L_T^\infty H^s} + \|(-\Delta)^\frac s2 V\|_{L^\frac ns}\|\p\|_{L_T^\infty L^\frac{2n}{n-2s}})\\
&\lesssim T(\|V\|_{L^\infty} + \|(-\Delta)^\frac s2 V\|_{L^\frac ns})\|\p\|_{L_T^\infty H^s}.
\end{aligned}\end{align}

On the other hand, since $\frac1{r_0'} = \frac{\ell}{r} + \frac1{r_0}$ for $\frac1r  = \frac{n-2s}{n(\ell+2)}$ which equals $\frac1{r_0} - \frac sn$, from the condition \eqref{cond-f} and Sobolev embedding $B_{r_0}^s \hookrightarrow L^r$ it follows that
$$
\|f(\cdot, \p)\|_{L_x^{r_0'}} \lesssim \|\p\|_{L_x^r}^\ell\|\p\|_{L_x^{r_0}} \lesssim \|\p\|_{B_{r_0}^s}^{\ell+1}.
$$
From \eqref{cond-f2} we have
\begin{align*}
&|f(x+y, \p(x+y)) - f(x, \p(x))| \\
&\lesssim \min(1, |y|)|\p(x+y)|^{\ell+1} + (|\p(x+y)|^\ell + |\p(x)|^\ell)|\p(x+y) - \p(x)|.
\end{align*}
So, we get as above
\begin{align*}
&\|f(\cdot+y, \p(\cdot+y)) - f(\cdot, \p(\cdot))\|_{L^{r_0'}}\\
& \lesssim \min(1, |y|)\|\p\|_{B_{r_0}^s}^{\ell+1} + \|\p\|_{B_{r_0}^s}^\ell\|\p(\cdot+y)-\p(\cdot)\|_{L^{r_0}}.
\end{align*}
Thus we get from \eqref{bes}
\begin{align}\begin{aligned}\label{f-bes}
\|f(\cdot, \p)\|_{B_{r_0'}^s} \lesssim \|\p\|_{B_{r_0}^s}^{\ell+1}(1+ \left(\int_0^\infty [\alpha^{-s}\min(1, \alpha)]^2\frac{d\alpha}\alpha\right)^\frac12) \lesssim \|\p\|_{B_{r_0}^s}^{\ell+1}.
\end{aligned}\end{align}


Now let us turn to the nonlinear estimate \eqref{nonlinear}. We take H\"{o}lder's inequality in $t$-variable with
$$
\frac1{q_0'} = \frac{\ell+1}{q_0} + \frac1{q_1}.
$$
From the condition of $\ell$ we have $1/{q_1} \ge 0$, and $1/q_1 = 0$ when $\ell = \frac{4s}{n-2s}$. Thus we get
\begin{align*}
\|\mathcal N(\p)\|_{L_T^\infty H^s \cap L_T^{q_0}B_{r_0}^s} &\le \|U(\cdot)\varphi\|_{L_T^{q_0}B_{r_0}^s} + CT\|\p\|_{L_T^\infty H^s} +  CT^\frac1{q_1} \|\p\|_{L_T^{q_0}B_{r_0}^s}^{\ell+1}\\
&\lesssim \|U(\cdot)\varphi\|_{L_T^{q_0}B_{r_0}^s} + CT\rho + CT^\frac1{q_1} \rho^{\ell+1}.
\end{align*}
If $\ell < \frac{4s}{n-2s}$, then since $\|U(\cdot)\varphi\|_{L_T^{q_0}B_{r_0}^s} \lesssim \|\varphi\|_{H^s}$, $T$ can be chosen to be dependent only on $C$ and $\|\varphi\|_{H^s}$ to guarantee $\mathcal N(\p) \in X_T^\rho$. If $\ell = \frac{4s}{n-2s}$, then we first choose $T, \rho$ such that $C(T\rho + T^\frac1{q_1}\rho^{\ell+1}) \le \rho/2$ and then choose smaller $T$ such that $\|U(\cdot)\varphi\|_{L_T^{q_0}B_{r_0}^s} \le \frac\rho2$, which means $\mathcal N(\p) \in X_T^\rho$.

\newcommand{\ps}{\Psi}
Now we show that $\mathcal N$ is a Lipschitz map for sufficiently
small $T$. Let $\p, \ps \in X_T^\rho$. Then from the same estimates as above we have
\begin{align*}
d_X(\mathcal N(\p), \mathcal N(\ps))&\le C\|V(\p-\Psi)\|_{L_T^1H^s } + C \|f(\cdot, \p) - f(\cdot, \ps)\|_{L_T^{q_0'}L^{r_0'}}\\
&\le CT(\|V\|_{L^\infty} + \|(-\Delta)^\frac s2 V\|_{L^\frac ns})\|\p - \Psi\|_{L_T^\infty H^s}\\
&\qquad\quad + C T^\frac1{q_1}(\|\p\|_{L_T^{q_0}B_{r_0}^s}^{\ell} + \|\ps\|_{L_T^{q_0}B_{r_0}^s}^\ell)\|\p-\ps\|_{L_T^{q_0}L^{r_0}}\\
&\le C(T+ T^\frac1{q_1}(2\rho^\ell)) \,d_X(\p-\ps).
\end{align*}
Thus for smaller $T$ and $\rho$ the mapping $\mathcal N$ is a
contraction and there is a fixed point $\p$ of $\mathcal N$ satisfying \eqref{int eqn}.
The uniqueness and time continuity follows easily from the equation
\eqref{int eqn} and Strichartz estimate. We omit the details.
\end{proof}
\begin{remark}
The mass and energy conservations are straightforward from the uniqueness. One can also show the conservation laws by the argument for Strichartz solutions of \cite{oz}. The global well-posedness follows easily from the conservations in case that $f(x, |\tau|) \le 0$ and $\ell < \frac{4s}{n-2s}$, or $\ell < \frac{4s}{n}$, or $f(x, |\tau|) \le 0$ and $\ell = \frac{4s}{n-2s}$ and $\|\varphi\|_{H^s}$ is small.
\end{remark}


\end{document}